\documentclass{amsart}
        \usepackage{amsmath}
        \usepackage{amsfonts}
        \usepackage{amsthm}
        \usepackage[hypertexnames=false]{hyperref}
        \usepackage{comment}
        \usepackage{url}
        \usepackage{color}
        \usepackage{enumitem}
        \setlist[enumerate]{font=\normalfont}
        \usepackage{mathtools}
		\usepackage{amsrefs}
        \usepackage{latexsym}
        \usepackage{amssymb}
		\usepackage{graphicx}        
        
        \usepackage{eucal}

        \theoremstyle{plain}
        \newtheorem*{theorem*}{Theorem}
        \newtheorem{theorem}{Theorem}[section]
        \newtheorem{corollary}[theorem]{Corollary}
        \newtheorem{lemma}[theorem]{Lemma}

        \theoremstyle{definition}
        \newtheorem{definition}[theorem]{Definition}

        \newtheorem*{example*}{Example}

        \theoremstyle{remark}
        \newtheorem{remark}[theorem]{Remark}
        \newtheorem*{remark*}{Remark}

\newcommand{\PP}{\mathbb{P}}
\newcommand{\RR}{\mathbb{R}}

\newcommand{\EE}{\mathbb{E}}

\begin{document}

\author{Hern\'an Garc\'ia}
\address{
Departamento de matem\'aticas\\
Universidad de los Andes\\
Carrera $1^{\rm ra}\#18A-12$\\ 
Bogot\'a, Colombia
}
\email{jh.garcia1776@uniandes.edu.co, mj.junca20@uniandes.edu.co, mvelasco@uniandes.edu.co}

\author{Camilo Hern\'andez}
\address{
Industrial Engineering and Operations Research Department\\ 
Columbia University\\
$500$ West $120$-th street\\ 
New York, NY 10027\\
}
\email{camilo.hernandez@columbia.edu}

\author{Mauricio Junca}
\address{
Departamento de matem\'aticas\\
Universidad de los Andes\\
Carrera $1^{\rm ra}\#18A-12$\\ 
Bogot\'a, Colombia
}
\email{mj.junca20@uniandes.edu.co}

\author{Mauricio Velasco}
\address{
Departamento de matem\'aticas\\
Universidad de los Andes\\
Carrera $1^{\rm ra}\#18A-12$\\ 
Bogot\'a, Colombia
}
\email{mvelasco@uniandes.edu.co}

\subjclass[2010]{Primary 15A29 % Inverse problems
Secondary 15B52,52A22} % Random matrices, Random Convex sets and integral geometry
\keywords{Compressed sensing, truncated moment problems, Kostlan-Shub-Smale polynomials}

\begin{abstract} We propose convex optimization algorithms to recover a good approximation of a point measure $\mu$ on the unit sphere $S\subseteq \RR^n$ from its moments with respect to a set of real-valued functions $f_1,\dots, f_m$. Given a finite subset $C\subseteq S$ the algorithm produces a measure $\mu^*$ supported on $C$ and we prove that $\mu^*$ is a good approximation to $\mu$ whenever the functions $f_1,\dots, f_m$ are a sufficiently large random sample of independent Kostlan-Shub-Smale polynomials.
More specifically, we give sufficient conditions for the validity of the equality $\mu=\mu^*$ when $\mu$ is supported on $C$ and prove that $\mu^*$ is close to the best approximation to $\mu$ supported on $C$ provided that all points in the support of $\mu$ are close to $C$.
 
\end{abstract}

\title{Compressive sensing and truncated moment problems on spheres.}
\maketitle

\section{Introduction}

Let $K\subseteq \RR^n$ be a compact set and let $V$ be a vector space of continuous real-valued functions on $K$. The truncated moment problem defined by $V$ consists of the following two parts:

\begin{enumerate}
\item Characterizing the convex cone $\mathcal{M}(V)$ of linear operators $L:V\rightarrow \RR$ which are {\it representable in $V$ by measures} i.e. those for which there exists a finite Borel measure $\mu$ on $K$ satisfying $L(f)=\int_K fd\mu$ for all $f\in V$.
\item Finding a reconstruction procedure which, given a representable operator $L\in \mathcal{M}(V)$, produces a finite Borel measure $\mu$ which represents $L$.
\end{enumerate} 

Truncated moment problems play a central role in modern convex optimization because they allow us to convexify, and often solve, optimization problems of the form $\max_{x\in K} f(x)$. It is easy to see that if $f\in V$ then the optimal value of this problem is equal to the optimal value of the convex optimization problem $\max_{L\in \mathcal{M}(K),L(1)=1} L(f)$ and that any measure $\mu$ which represents a maximizer $L^*$ is supported at points of $K$ where $f$ achieves its maximum value. This approach is one of the main methods for solving polynomial optimization problems in practice (see for instance the book~\cite{L} and its extensive reference list).

Discrete measures (i.e. conic combinations $\sum_{i=1}^k c_i\delta_{x_i}$ of Dirac delta measures supported at points $x_i\in K$) play a fundamental role in the solution of both $(1)$ and $(2)$ above. The main reason is that under rather general circumstances the cone $\mathcal{M}(V)\subseteq V^*$ coincides with the cone of discrete measures (see Lemma~\ref{Lem: basicTM} for a precise statement).

In this article we therefore focus on problem $(2)$ above for discrete measures. Our main result is to propose new approximate reconstruction procedures when $K=S^{n-1}$ is the unit sphere in $\RR^n$ and $V$ is the set of polynomials of degree $\leq d$ for some integer $d>0$. More concretely, we ask: {\it Given the vector $b_{\mu}$ of monomial {\it moments of degree $\leq d$} of a discrete measure $\mu:=\sum_{i=1}^k g_i\delta_{x_i}$ defined by $(b_{\mu})_{\alpha}=\int_K x^{\alpha}d\mu$, how to find a discrete measure $\mu^*$ which is a good approximation of $\mu$?} 

Our proposal is to choose a sufficiently dense code $C=\{q_1,\dots, q_N\}\subseteq S^{n-1}$ and try to find a measure $\mu^*$ {\it supported on $C$} which is a good approximation to $\mu$. In this setting the points of $C$ and the $m$ monomial functions of degree $\leq d$ determine a linear measurement map $M:\RR^N\rightarrow \RR^m$ which sends the coefficients $(c_1,\dots, c_N)$ of a discrete measure $\sum c_i\delta_{q_i}$ supported on $C$ to its vector of monomial moments $(b)_{\alpha}:=\sum_{i=1}^Nc_ix^{\alpha}(q_i)$. Using $M$ we can reintepret the problem of finding a good approximation for $\mu$ into one of finding {\it approximate solutions to a system of linear equations}. This formulation allows us to think of the problem as an instance of {\it compressed sensing} and to address it via convex optimization. In particular, we can apply the remarkable results of Cand\'es, Donoho, Romberg, Tao and others (see for instance \cite{CandesRombergTao}, \cite{CandesTao},\cite{Donoho}) and obtain recovery guarantees. Our first result, proven in in Section~\ref{Sec: BasicCS}, gives such recovery guarantees in terms of the restricted isometry constants $\delta_{2k}(M)$ of the map $M$ (see Section~\ref{Prelim: RIP} for precise definitions):

\begin{theorem*}[{\bf A}] \label{CompressedComputing} 
Assume the inequality $\delta_{2k}(M)<\sqrt{2}-1$ holds. Then there exists a constant $B_1$ such that
\begin{enumerate}
\item{{\bf Exact recovery:} If ${\rm supp}(\mu)\subseteq C$ and $c^*$ is a minimizer of the problem 
\[\min \|c\|_1\text{ subject to $Mc=b_\mu$.}\] then $\mu=\sum_{i=1}^N c^*_i\delta_{q_i}$.}
\item{{\bf Approximate recovery:} Assume there exists a measure $\nu=\sum_{i=1}^k (c_\nu)_i\delta_{q_{j(i)}}$ supported on $C$ with $\|b_{\mu}-b_{\nu}\|_2\leq \tau$. If $c^*$ is a minimizer of the problem
\[\min \|c\|_1\text{ subject to $\|Mc-b_\mu\|_2\leq 
\tau$.}\]
then $\|c^*-c_\nu\|_2\leq B_1\tau$.
}
\end{enumerate}
\end{theorem*}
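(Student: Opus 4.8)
The plan is to deduce both statements directly from the now-classical stable recovery theorems for $\ell_1$-minimization under the restricted isometry property, specifically the version of Cand\'es--Romberg--Tao (see \cite{CandesRombergTao}, \cite{CandesTao}; the relevant threshold is also recorded in Section~\ref{Prelim: RIP}). The key translation is that a discrete measure $\nu=\sum_{i=1}^k (c_\nu)_i\delta_{q_{j(i)}}$ supported on $C$ corresponds to a vector $c_\nu\in\RR^N$ with at most $k$ nonzero entries, i.e. a $k$-sparse vector, and that under this identification $b_\nu=Mc_\nu$. Once this dictionary is in place, the two parts of the theorem are exactly the noiseless and noisy recovery guarantees applied to $M$.

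For part (1), note that ${\rm supp}(\mu)\subseteq C$ means $\mu$ corresponds to a $k$-sparse vector $c_\mu\in\RR^N$ with $Mc_\mu=b_\mu$, so $c_\mu$ is feasible for the program $\min\|c\|_1$ subject to $Mc=b_\mu$. The hypothesis $\delta_{2k}(M)<\sqrt2-1$ is precisely the condition guaranteeing that $\ell_1$-minimization recovers every $k$-sparse vector uniquely from its image under $M$. Hence the minimizer $c^*$ must equal $c_\mu$, and translating back, $\mu=\sum_{i=1}^N c^*_i\delta_{q_i}$.

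For part (2), the measure $\nu$ corresponds to a $k$-sparse vector $c_\nu$, and the hypothesis $\|b_\mu-b_\nu\|_2\le\tau$ says exactly $\|Mc_\nu-b_\mu\|_2\le\tau$; in particular $c_\nu$ is feasible for the program $\min\|c\|_1$ subject to $\|Mc-b_\mu\|_2\le\tau$. The stable recovery theorem under $\delta_{2k}(M)<\sqrt2-1$ states that its minimizer $c^*$ satisfies
\[\|c^*-c_\nu\|_2\le C_0\,\frac{\|c_\nu-(c_\nu)_{k}\|_1}{\sqrt k}+C_1\tau,\]
where $(c_\nu)_k$ denotes a best $k$-term approximation of $c_\nu$ and $C_0,C_1$ are absolute constants depending only on $\delta_{2k}(M)$. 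Since $c_\nu$ is already $k$-sparse we have $(c_\nu)_k=c_\nu$ and the first term vanishes; setting $B_1:=C_1$ gives $\|c^*-c_\nu\|_2\le B_1\tau$.

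There is no genuine obstacle here: the statement is a packaging of standard compressed sensing results, and the only points requiring care are (a) making the measure/sparse-vector dictionary precise, including the observation that the atoms of $\mu$ and $\nu$ lie among the $N$ code points so that the relevant vectors are genuinely $k$-sparse rather than merely compressible, so that the best $k$-term error term in the cited inequality is zero; and (b) checking that $B_1$ can be taken to depend only on $\delta_{2k}(M)$, and hence is a genuine constant once the RIP bound is assumed, which is explicit in the references.
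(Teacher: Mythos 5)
Your proof is correct and follows essentially the same route as the paper: identify measures supported on $C$ with $k$-sparse vectors in $\RR^N$ so that $b_\nu=Mc_\nu$, then invoke the Cand\'es--Romberg--Tao noiseless and noisy recovery guarantees under $\delta_{2k}(M)<\sqrt2-1$ (Theorem~\ref{RIPrelevance}), with the best-$k$-term approximation error vanishing because $c_\nu$ is exactly $k$-sparse. No issues.
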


We do not know the value of the restricted isometry constants $\delta_{2k}(M)$ for the map $M$ and due to the well known ill-conditioning of the Vandermonde matrices we do not expect them to be small in general (i.e. for all point configurations $C$). 

While it may be difficult to determine the isometry constants for a given linear map (the problem is known~\cite{CCRip} to be NP-hard) the literature in compressed sensing has emphasized since its inception that it is much easier to understand the restriced isometry constants of {\it random} matrices (for instance of those with independent standard normal entries). In that spirit we ask whether it is possible to {\it randomize} the measurement matrix $M$ replacing the monomial basis by a basis {\it consisting of random polynomials} in order to obtain more explicit recovery guarantees. 

In Section~\ref{Sec:Random} we show that the answer to this question is affirmative. The key idea is that there is a natural probability measure on the space of polynomials of degree at most $d$ on $S^{n-1}$ which is invariant under the natural action of the orthogonal group $O(n)$. This is the well-known Kostlan-Shub-Smale measure~\cite{Kostlan},~\cite{ShubSmale}, which is explicitly given by polynomials  $P(x)=\sum_{\alpha: |\alpha|\leq d} A_{\alpha} x^{\alpha}$ where the coefficients $A_{\alpha}$ are independent Gaussian random variables with mean zero and variance $\frac{\binom{d}{\alpha}}{2^d}$. The orthogonal invariance allows us to relate the {\it average} behavior of the restricted isometry constants of the measurement matrices defined by an independent sample of Kostlan-Shub-Smale polynomials $f_1,\dots, f_m$ of degree $d$ with the geometry of the points in $C$. In Section~\ref{Sec:Random} we combine this idea with the appropriate concentration inequalities and prove the following Theorem,

\begin{theorem*}[{\bf B}]\label{main} Let  $\Phi_{ij}:=\frac{f_i(q_j)}{\sqrt{m}}$. For any real number $0<\delta<\frac{1}{2} $, any integer $k$, and all sufficiently large $d$ the following inequality holds
\[ \PP\left\{\delta_{2k}(\Phi)>\delta\right\}< \binom{N}{2k}\left(\frac{30}{\delta}\right)^{2k}2e^{-mc_0\left(\frac{\delta}{6}\right)}\] 
where $c_0(\eta):=\min\left(\frac{1}{2}\log\left(\frac{1}{1+\eta}\right)+\frac{\eta}{2}, \frac{1}{2}\log\left(\frac{1}{1-\eta}\right)-\frac{\eta}{2}\right).$

In particular, for all sufficiently large $d$, there exist matrices $\Phi$ such that $\delta_{2k}(\Phi)<\delta$ whenever 
\[m\geq \frac{2k\log\left(\frac{30eN}{2k\delta}\right)+\log(2)}{c_0\left(\frac{\delta}{6}\right)}\]
\end{theorem*}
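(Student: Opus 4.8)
The plan is to run the now-standard net-based argument for restricted isometry estimates of random matrices (in the spirit of Baraniuk, Davenport, DeVore and Wakin): first establish a pointwise concentration inequality for $\|\Phi x\|_2^2$ around $\|x\|_2^2$; then use an $\epsilon$-net of the unit sphere of each coordinate subspace $\RR^T$, $|T|=2k$, to promote this to a statement holding uniformly over all vectors supported on $T$; and finally take a union bound over the $\binom{N}{2k}$ possible supports. The Kostlan--Shub--Smale hypothesis and the ``sufficiently large $d$'' assumption enter only in the first step, through the covariance structure of the measurement vectors.

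First I would compute that covariance. Using the multinomial theorem and the prescribed variances $\binom{d}{\alpha}2^{-d}$ one gets, for $q,q'\in S^{n-1}$, that $\EE\!\left[f_i(q)f_i(q')\right]=\bigl(\tfrac{1+\langle q,q'\rangle}{2}\bigr)^{d}$ (the analytic shadow of the $O(n)$-invariance emphasized in the introduction). In particular each $f_i(q_j)$ is a standard Gaussian, and the Gaussian vector $Z^{(i)}:=(f_i(q_1),\dots,f_i(q_N))$ has covariance matrix $\Sigma$ with $\Sigma_{jj}=1$ and $\Sigma_{jl}=\bigl(\tfrac{1+\langle q_j,q_l\rangle}{2}\bigr)^{d}$ for $j\neq l$. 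Since the points of $C$ are distinct, $\max_{j\neq l}\langle q_j,q_l\rangle<1$, so $\Sigma\to I_N$ entrywise --- hence in operator norm, $N$ being fixed --- as $d\to\infty$. Thus, given any $\eta>0$, for all sufficiently large $d$ we have $\|\Sigma-I_N\|_{\mathrm{op}}\le\eta$; fix such a $d$. This is exactly the point where ``sufficiently large $d$'' is needed: for small $d$ the quantity $\|\Phi x\|_2^2$ concentrates around $\langle x,\Sigma x\rangle$, not $\|x\|_2^2$.

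Next, fix $x\in\RR^N$ with $\|x\|_2=1$. Since the $f_i$ are independent, $m\|\Phi x\|_2^2=\sum_{i=1}^m\langle Z^{(i)},x\rangle^2$ is a sum of $m$ i.i.d.\ copies of $\sigma^2\chi^2_1$ with $\sigma^2=\langle x,\Sigma x\rangle\in[1-\eta,1+\eta]$. The Chernoff bound for chi-squared sums gives, for standard Gaussians $W_i$ and $t>0$,
\[
\PP\left\{\tfrac1m\sum_{i=1}^m W_i^2\ge 1+t\right\}\le e^{-m\left(\frac12\log\frac1{1+t}+\frac t2\right)},\qquad
\PP\left\{\tfrac1m\sum_{i=1}^m W_i^2\le 1-t\right\}\le e^{-m\left(\frac12\log\frac1{1-t}-\frac t2\right)},
\]
so that, after rescaling by $\sigma^2\in[1-\eta,1+\eta]$, one obtains a bound of the form $\PP\{\,|\,\|\Phi x\|_2^2-1\,|\ge\delta'\}\le 2e^{-m c_0(g(\delta',\eta))}$ for an explicit $g$, and choosing $\eta$ small enough relative to $\delta$ one may take $g(\delta',\eta)$ to be an absolute fraction of $\delta$. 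Then, for a fixed $T$ with $|T|=2k$, take an $\epsilon$-net $\mathcal N_T$ of the unit sphere of $\RR^T$ with $|\mathcal N_T|\le(1+2/\epsilon)^{2k}\le(3/\epsilon)^{2k}$, union bound Step~2 over $\mathcal N_T$, and apply the usual extension lemma (controlling the least $A$ with $\|\Phi y\|_2\le(1+A)\|y\|_2$ for $y$ supported on $T$ from the values of $\Phi$ on the net) to conclude $|\,\|\Phi y\|_2^2-\|y\|_2^2\,|\le\delta$ for all $y$ supported on $T$, off an event of probability at most $(3/\epsilon)^{2k}\,2e^{-m c_0(\cdot)}$. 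A union bound over the $\binom{N}{2k}$ sets $T$ yields the first displayed inequality; tracking the constants --- $\epsilon=\delta/10$ gives $3/\epsilon=30/\delta$, and a suitably small $\eta$ together with a suitable fraction $\delta'$ of $\delta$ makes the argument of $c_0$ equal to $\delta/6$ --- produces exactly the stated bound. The ``in particular'' clause is then immediate: if the right-hand side is $<1$ some $\Phi$ has $\delta_{2k}(\Phi)<\delta$, and solving $\binom{N}{2k}(30/\delta)^{2k}2e^{-mc_0(\delta/6)}<1$ for $m$ while using $\binom{N}{2k}\le(eN/2k)^{2k}$ gives $m\ge\bigl(2k\log(30eN/(2k\delta))+\log 2\bigr)/c_0(\delta/6)$.

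The one genuinely nonstandard ingredient, and the place I expect most of the work, is feeding the perturbation $\|\Sigma-I_N\|_{\mathrm{op}}\le\eta$ through the chi-squared Chernoff estimate and the net-extension lemma: unlike in the classical Gaussian case the rows $Z^{(i)}$ of $\sqrt m\,\Phi$ are not vectors of i.i.d.\ standard Gaussians, so one must be careful that the final exponent remains $c_0$ evaluated at an absolute multiple of $\delta$, and that the estimate holds uniformly over all $2k$-subsets $T$. The uniformity is essentially free, since $N$ --- hence $\Sigma$ and the number of subsets --- is fixed before $d$ is sent to infinity, but obtaining the clean constants $30$ and $\delta/6$ does require some bookkeeping.
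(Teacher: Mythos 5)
Your proposal is correct and follows essentially the same route as the paper: the covariance computation $\EE[f_i(q)f_i(q')]=\bigl(\tfrac{1+\langle q,q'\rangle}{2}\bigr)^d$ (the paper's Lemma 4.4), the chi-squared Chernoff bound yielding $c_0$ (Lemma 4.7), the observation that the covariance matrix tends to the identity for large $d$ (the paper uses Gershgorin on each principal submatrix, you use the operator norm of the full matrix — equivalent here), and the Baraniuk--Davenport--DeVore--Wakin net argument with a union bound over the $\binom{N}{2k}$ supports. The only differences are bookkeeping choices in the net radius and where the perturbation of the covariance from the identity is absorbed, which you flag correctly as the place requiring care.
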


Combining the previous two theorems we are able to prove our main result, which guarantees that the proposed method recovers good approximations of measures from moments with respect to a sufficiently large sample of Kostlan-Shub-Smale polynomials, provided the measure is close to the points of our grid $C$. More precisely, if $\mu:=\sum_{i=1}^kg_i\delta_{x_i}$ is a point measure with given moments $(b_{\mu})_i:=\int_K f_id\mu$ and $\theta:=\max_j\min_i\arccos\langle x_j,q_i\rangle$ then the following Theorem holds,

\begin{theorem*}[{\bf C}]\label{main:Approx} For any $\epsilon>0$ the optimization problem 

\[\min \|c\|_1\text{ s.t. $\|Mc-b_{\mu}\|\leq (1+\epsilon)  \|g\|_2\sqrt{2k\left(1-\left(\frac{1+\cos(\theta)}{2}\right)^d\right)}$} \]

has as optimal solution a vector $z^*$ satisfying
\[ \|g-z^*\|_2\leq B_1(1+\epsilon)\|g\|_2\sqrt{2k\left(1-\left(\frac{1+\cos(\theta)}{2}\right)^d\right)}\]
for all sufficiently large $d$ with probability at least
\[ 1- \binom{N}{2k}\left(\frac{30}{\sqrt{2}-1}\right)^{2k}2e^{-mc_0\left(\frac{\sqrt{2}-1}{6}\right)} - 2e^{-mc_0(\epsilon)}\]
where $c_0(\eta):=\min\left(\frac{1}{2}\log\left(\frac{1}{1+\eta}\right)+\frac{\eta}{2}, \frac{1}{2}\log\left(\frac{1}{1-\eta}\right)-\frac{\eta}{2}\right).$
\end{theorem*}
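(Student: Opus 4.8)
The plan is to derive Theorem~C directly from Theorems~A and~B by exhibiting a convenient sparse competitor for $\mu$ together with an explicit bound on the norm of its moment defect. I would take $M=\Phi$, the normalized measurement matrix of Theorem~B, with $b_\mu$ the correspondingly normalized moment vector $(b_\mu)_i=\tfrac{1}{\sqrt m}\int_K f_i\,d\mu$ (the scaling under which the restricted isometry estimates of Theorem~B hold). For each support point $x_l$ of $\mu=\sum_{l=1}^k g_l\delta_{x_l}$ pick a code point $q_{j(l)}\in C$ realizing $\min_i\arccos\langle x_l,q_i\rangle\le\theta$, set $\nu:=\sum_{l=1}^k g_l\delta_{q_{j(l)}}$, and let $c_\nu\in\RR^N$ be its coefficient vector, which is $k$-sparse and which we identify with $g$ in the statement (one may assume $l\mapsto j(l)$ is injective, replacing $k$ by the number of distinct code points used otherwise). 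Put $\tau_0:=\|g\|_2\sqrt{2k\bigl(1-(\tfrac{1+\cos\theta}{2})^d\bigr)}$ and $\tau:=(1+\epsilon)\tau_0$. The key observation is that on the intersection of the events $E_1:=\{\delta_{2k}(\Phi)<\sqrt2-1\}$ and $E_2:=\{\|\Phi c_\nu-b_\mu\|_2\le\tau\}$, part~(2) of Theorem~A applies verbatim with this $\tau$ and this $\nu$ and shows that the minimizer $z^*$ of the displayed program satisfies $\|g-z^*\|_2=\|c_\nu-z^*\|_2\le B_1\tau$, which is exactly the asserted inequality. So the only real work is to bound $\PP(E_1\cap E_2)$ from below.

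For $E_1$ I would invoke Theorem~B with $\delta=\sqrt2-1$, which lies in $(0,\tfrac12)$: for all sufficiently large $d$ this gives $\PP(E_1^c)\le\binom{N}{2k}\bigl(\tfrac{30}{\sqrt2-1}\bigr)^{2k}2e^{-mc_0((\sqrt2-1)/6)}$, the first failure term in the statement. (The cosmetic gap between $\delta_{2k}(\Phi)\ge\sqrt2-1$ and $\delta_{2k}(\Phi)>\sqrt2-1$ can be absorbed by running the argument at some $\delta$ slightly below $\sqrt2-1$.)

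For $E_2$ I would first use the Kostlan--Shub--Smale structure. From the prescribed variances and the multinomial identity $\sum_{|\alpha|\le d}\binom{d}{\alpha}x^\alpha y^\alpha=(1+\langle x,y\rangle)^d$ one gets the covariance kernel on the sphere $\EE[f(x)f(y)]=\bigl(\tfrac{1+\langle x,y\rangle}{2}\bigr)^d$, hence $\EE[f(x)^2]=1$ and $\EE[(f(x)-f(y))^2]=2\bigl(1-(\tfrac{1+\langle x,y\rangle}{2})^d\bigr)$; since $t\mapsto(\tfrac{1+\cos t}{2})^d$ is decreasing on $[0,\pi]$ and $\arccos\langle x_l,q_{j(l)}\rangle\le\theta$, each difference $Z_l:=f_i(x_l)-f_i(q_{j(l)})$ has $\EE[Z_l^2]\le 2\bigl(1-(\tfrac{1+\cos\theta}{2})^d\bigr)$. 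Writing $e_i:=\sqrt m\,(\Phi c_\nu-b_\mu)_i=\sum_l g_l Z_l$, the triangle inequality in $L^2$ and Cauchy--Schwarz give
\[\EE[e_i^2]=\Bigl\|\sum_l g_l Z_l\Bigr\|_{L^2}^2\le\Bigl(\sum_l|g_l|\,\|Z_l\|_{L^2}\Bigr)^2\le\|g\|_2^2\cdot 2k\Bigl(1-\bigl(\tfrac{1+\cos\theta}{2}\bigr)^d\Bigr)=\tau_0^2,\]
and this common value $\sigma^2:=\EE[e_i^2]\le\tau_0^2$ is independent of $i$. The $e_i$ are independent centered Gaussians (each $f_i$ carries its own independent coefficient vector), so $m\|\Phi c_\nu-b_\mu\|_2^2=\sum_{i=1}^m e_i^2$ is distributed as $\sigma^2\chi^2_m$ with $\sigma^2\le\tau_0^2$. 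Applying the standard tail bound $\PP(\chi^2_m\ge(1+t)m)\le e^{-\frac m2(t-\ln(1+t))}$ with $1+t=(1+\epsilon)^2$, together with the elementary inequality $\tfrac12\bigl((1+\epsilon)^2-1-2\ln(1+\epsilon)\bigr)\ge\tfrac12(\epsilon-\ln(1+\epsilon))\ge c_0(\epsilon)$, yields $\PP(E_2^c)\le\PP\bigl(\chi^2_m>(1+\epsilon)^2 m\bigr)\le e^{-mc_0(\epsilon)}\le 2e^{-mc_0(\epsilon)}$. A union bound over $E_1^c$ and $E_2^c$ then gives the probability lower bound stated in the theorem.

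I expect the main obstacle to be not any single step but making all the constants line up exactly: computing the covariance kernel and the resulting estimate $2\bigl(1-(\tfrac{1+\cos\theta}{2})^d\bigr)$ for one difference, carefully tracking how the passage from one polynomial to $m$ of them and from $\|g\|_1$ to $\|g\|_2$ lands precisely on $\tau_0$, and calibrating the comparison inside the chi-squared tail so that the exponent comes out as exactly $c_0(\epsilon)$ rather than a nearby quantity. Everything else is a routine application of Theorems~A and~B.
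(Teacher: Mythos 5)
Your proposal is correct and follows essentially the same route as the paper: apply Theorem~A part (2) with the sparse competitor $\mu_C$, control the RIP event via Theorem~B at $\delta=\sqrt2-1$, and control the moment defect by combining the mean-squared-error bound $\EE\|b_\mu-b_{\mu_C}\|_2^2\le\tau_0^2$ (the paper's Lemma~5.1) with a $\chi^2_m$ tail estimate (the paper's Lemma~4.6), finishing with a union bound. Your write-up is in fact slightly more careful than the paper's, since you spell out why the concentration argument applies to the off-grid difference vector $b_\mu-b_{\mu_C}$ and why the exponent can be taken to be $c_0(\epsilon)$ after squaring the $(1+\epsilon)$ factor.
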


Motivated by these results we propose the following second-order cone programming procedure for approximate recovery of a discrete measure $\mu$ from its vector of moments $b_{\mu}$ with respect to any set of functions $h_1,\dots, h_m$.

\begin{enumerate}
\item Fix a code $C=\{q_1,\dots, q_N\}\subseteq S^{n-1}$ and define $M_{ij}=h_i(q_j)$.
\item Fix a small tolerance parameter $\tau$ and let $c^*$ be the solution of the convex optimization problem
\[\min \|c\|_1\text{ s.t. $\|Mc-b_{\mu}\|_2\leq \tau$}\]
\item Return $\mu^*:=\sum_{i=1}^N\max(c_i^*,0)\delta_{q_i}$.
\end{enumerate}
Our numerical experiments (see Section~\ref{Sec: numericAlgos}) suggest that this procedure works well for sufficiently small $\tau$ in practice and that it can be solved efficiently even for large instances. 

Finally we prove in Theorem~\ref{thm: consistency} that if $\mu$ is a probability measure then our approximation algorithm can be applied to a sequence $C_{(j)}$ of successively denser codes to obtain a consistent estimation of $\mu$, that is, to construct a sequence of probability measures $\mu_{C_{(j)}}^*$ supported on $C_{(j)}$ which converges to $\mu$ in the Wasserstein metric, assuming we know the moments of $b_{\mu}$ with respect to sufficiently many Kostlan-Shub-Smale polynomials.

\smallskip

{\bf Acknowledgements.} We wish to thank Diego Armentano, Greg Blekherman and Fabrice Gamboa for useful conversations during the completion of this project.
M. Junca and M. Velasco were partially supported by the FAPA funds from Universidad de los Andes.

\section{Preliminaries}

{\bf Notation.} For an integer $N$ let $[N]=\{1,2,\dots, N\}$. For an $m\times N$ matrix $A$ and a subset $S\subseteq [N]$ we let $A_S$ be the $m\times |S|$ submatrix of $A$ consisting of the columns of $A$ indexed by the elements of $S$. A multi-index $\alpha$ with $n$-parts is an $n$-tuple $\alpha=(\alpha_1,\dots \alpha_n)$ of natural numbers $\alpha_i$. A multi-index $\alpha$ with $n$-parts has a degree $|\alpha|:=\alpha_1+\dots +\alpha_n$ and an associated monomial $x^{\alpha}:=\prod_{i=1}^n x_i^{\alpha_i}$ in the ring of polynomials $\RR[x_1,\dots, x_n]$. A vector $x\in \mathbb{R}^N$ is called $s$-sparse if it has at most $s$ non-zero components. 

\subsection{Truncated moment problems}
As in the introduction let $K\subseteq \RR^n$ be a compact set and let $V$ be a finite-dimensional vector subspace of the space $C(K,\RR)$ continuous real-valued functions on $K$. By a discrete measure on $K$ we mean a conic combination of Dirac delta measures supported at points of $K$. If $\nu$ is a finite Borel measure on $K$ let $L_{\nu}: C(X,\RR)\rightarrow \RR$  be the map given by $L_{\nu}(f):=\int_K f d\nu$. We say that an operator $L:V\rightarrow \RR$ is representable by a measure if there exists a finite Borel measure $\nu$ such that $L(f)=L_{\nu}(f)$ for every $f\in V$. The following Lemma, which we learned from Greg Blekherman~\cite{GB}, explains the key role played by discrete measures in truncated moment problems. It is a generalization of results of Tchakaloff~\cite{Tchakaloff} and Putinar~\cite{Putinar}.

\begin{lemma} \label{Lem: basicTM} If the functions in $V$ have no common zeroes on $K$ then every linear operator $L\in V^*$ representable by a measure is representable by a discrete measure with at most $\dim(V^*)+1$ atoms.
\end{lemma}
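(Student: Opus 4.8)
The plan is to realize $L$ as a point in the convex cone generated by the "moment curve" $\{ \mathrm{ev}_x : x \in K\}\subseteq V^*$, where $\mathrm{ev}_x(f) = f(x)$, and then apply Carath\'eodory's theorem. First I would observe that, since $K$ is compact and the functions in $V$ are continuous, the set $\Gamma := \{\mathrm{ev}_x : x\in K\}$ is a compact subset of the finite-dimensional space $V^*$ (it is the continuous image of $K$ under $x\mapsto \mathrm{ev}_x$). Let $\mathcal{C}$ be the convex cone it generates, i.e. the set of all finite nonnegative combinations $\sum_i c_i\,\mathrm{ev}_{x_i}$ with $c_i\geq 0$ and $x_i\in K$; these are exactly the operators representable by a discrete measure. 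The goal is to show (a) every $L$ representable by a measure lies in $\mathcal{C}$, and (b) every element of $\mathcal{C}$ can be written with at most $\dim(V^*)+1$ atoms.

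For step (b), I would invoke the conic version of Carath\'eodory's theorem: any point of the cone generated by a set $\Gamma\subseteq V^*$ lies in the cone generated by at most $\dim(V^*)$ elements of $\Gamma$ if one is careful, but the clean statement that always works is that a point in the convex hull of $\Gamma\cup\{0\}$ (a set living in a $\dim(V^*)$-dimensional space) needs at most $\dim(V^*)+1$ points of $\Gamma\cup\{0\}$; discarding the possible $0$ term, and absorbing scalars, gives a representation of $L$ as $\sum_{i=1}^{\dim(V^*)+1} c_i\,\mathrm{ev}_{x_i}$ with $c_i \ge 0$, which is exactly a discrete measure with at most $\dim(V^*)+1$ atoms.

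The substantive part is step (a): showing that an $L$ representable by \emph{some} finite Borel measure $\nu$ already lies in the cone $\mathcal{C}$ generated by the point evaluations. Here is where the hypothesis that the functions in $V$ have no common zero on $K$ enters. Suppose for contradiction that $L\notin \mathcal{C}$. Since $\Gamma$ is compact and does not contain $0$ (no common zeroes means for each $x$ some $f\in V$ has $f(x)\neq 0$, so $\mathrm{ev}_x\neq 0$), the cone $\mathcal{C}$ is closed, so by the separating hyperplane theorem there is a functional on $V^*$, i.e. an element $f\in V$, with $\langle \mathrm{ev}_x, f\rangle = f(x)\geq 0$ for all $x\in K$ but $\langle L, f\rangle = L(f) < 0$. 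But $L(f) = \int_K f\,d\nu \geq 0$ because $f\geq 0$ on $K$ and $\nu$ is a nonnegative measure — a contradiction. Hence $L\in\mathcal{C}$, and combining with step (b) finishes the proof.

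The main obstacle is the closedness of the cone $\mathcal{C}$, which is needed to apply separation cleanly; I expect to handle it exactly as above, using that $\Gamma = \{\mathrm{ev}_x : x\in K\}$ is compact and bounded away from $0$ (a compact set not containing the origin), so the cone it generates is closed — this is the one place the no-common-zeroes hypothesis and compactness of $K$ are both essential. (Alternatively one can first pass, via the classical Tchakaloff/Putinar argument, to a representing measure with finite support using that $V$ is finite dimensional, and then apply Carath\'eodory; but the separation argument above is the cleanest route and makes the role of the hypotheses transparent.)
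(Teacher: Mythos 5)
Your proposal follows essentially the same route as the paper: the paper phrases your step (a) as biduality between the cone $P$ of functions nonnegative on $K$ and the closure of the cone generated by the point evaluations, which is exactly your separating-hyperplane argument unfolded, and both arguments then invoke the closedness of that cone and finish with Carath\'eodory. The one step where your justification is too quick is the closedness of $\mathcal{C}$: for the conical hull of a compact set $\Gamma\subseteq V^*$ to be closed it is not enough that $\Gamma$ be compact with $0\notin\Gamma$; one needs $0\notin\operatorname{conv}(\Gamma)$. (For instance, the union in $\RR^3$ of the circle $\{(\cos\theta,\sin\theta,1)\}$ with the single point $(1,0,-1)$ is compact and misses the origin, yet the convex cone it generates is not closed.) The paper invokes the correct sufficient condition $0\notin\operatorname{conv}(\phi(K))$ at this point, so you should upgrade your ``$\mathrm{ev}_x\neq 0$'' observation to an argument that $0$ is not in the convex hull of the evaluations; note that even this requires some care beyond the bare no-common-zeroes hypothesis (if $V$ is spanned by $f(x)=x$ on $K=\{-1,1\}$ then $\mathrm{ev}_{-1}=-\mathrm{ev}_{1}$, so $0$ lies in the convex hull), although in the intended applications it is immediate because $1\in V$ places $\operatorname{conv}(\Gamma)$ in the affine hyperplane $\{L: L(1)=1\}$. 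Everything else --- the separation step and the Carath\'eodory count of $\dim(V^*)+1$ atoms --- is correct and matches the paper.
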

\begin{proof} Let $P\subseteq V$ be the closed convex cone of functions in $V$ which are nonnegative at all points of $K$. It is immediate that $P={\rm Conv}(L_{\delta_x}:x\in K)^*$. By the bi-duality Theorem from convex geometry we conclude that $P^*= \overline{{\rm Conv}(L_{\delta_{x}}:x\in K)}$. 
Now consider the map $\phi: K\rightarrow V^*$ sending a point $x$ to the restriction of $L_{\delta_x}$ (i.e. to the evaluation at $x$). This map is continuous and therefore $S:=\phi(K)$ is a compact set. Since the functions in $V$ have no points in common the convex hull of $S$ does not contain zero and therefore the cone of discrete measures ${\rm Conv}(L_{\delta_x}:x\in K)$ is closed in $V^*$. Let $\mathcal{M}(V)\subseteq V^ *$ be the cone of operators representable by a finite borel measure. Since 
${\rm Conv}(L_{\delta_x}:x\in K) \subseteq \mathcal{M}(V)\subseteq P^*$ we conclude that $\mathcal{M}(V)$ equals the cone of discrete measures as claimed. The bound on the number of atoms follows from Caratheodory's Theorem~\cite{Barvinok}.
\end{proof}

\subsection{The restricted isometry property}
\label{Prelim: RIP}

In this section we recall some basic facts about the restricted isometry property, introduced by Cand\'es and Tao in~\cite{CandesTao}.

\begin{definition} Let $A: \mathbb{R}^N\rightarrow \mathbb{R}^m$ be a linear map. The $s$-th isometry constant of $A$ is the smallest real number $\delta_s$ such that the following inequalities hold
\[(1-\delta_s)\|x\|_2^2\leq \|Ax\|_2^2\leq (1+\delta_s)\|x\|_2^2 \] 
for all $s$-sparse vectors $x\in \mathbb{R}^N$.
Equivalently, $\delta_s$ is the smallest real number such that the eigenvalues of the positive semidefinite matrices $A_S^tA_S$ are contained in the interval $[1-\delta_{s},1+\delta_{s}]$ for all $S\subseteq [N]$ with $|S|=s$.
\end{definition}

The importance of the restricted isometry property in the context of compressive sensing is summarized in Theorem~\ref{RIPrelevance} below, due to Cand\'es, Romberg and Tao~\cite{CandesRombergTao}. For a vector $x\in \mathbb{R}^n$ and a positive integer $s$ we let $x_s$ be the best $s$-sparse approximation to $x$ defined as the vector with all but the $s$-entries with largest absolute value of $x$ set to zero.

\begin{theorem}\label{RIPrelevance} \cite[Theorems 1.1 and 1.2]{Candes}. Let $k$ be any positive integer. If $A\in \RR^{m\times N}$ satisfies $\delta_{2k}(A)<\sqrt{2}-1$ then there exist constants $B_0,B_1$ such that the following statements hold for any $x\in\mathbb{R}^N$
\begin{enumerate}
\item The solution 
$x^*$ to the problem
\[\min\|z\|_1\text{ subject to $Az=Ax$}\] 
satisfies the inequality 
$\|x-x^*\|_2\leq B_0\|x-x_k\|_1$. 
\item If $y=Ax+w$ where $w$ is an unknown noise term with $\|w\|_2\leq \eta$ then the solution $x^*$ to the problem
\[\min\|z\|_1\text{ subject to $\|Az-y\|_2\leq \eta$}\] 
satisfies the inequality
\[\|x-x^*\|_2\leq B_0k^{-\frac{1}{2}}\|x-x_k\|_1+B_1\eta.\]
\end{enumerate}
\end{theorem}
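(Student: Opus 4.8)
The plan is to prove this via the now-standard ``cone plus near-orthogonality'' argument for $\ell_1$ recovery under the restricted isometry property. Throughout I write $x^*$ for the minimizer of the relevant program, set $h := x^* - x$, and let $T_0 \subseteq [N]$ index the $k$ entries of $x$ of largest absolute value, so that $x_{T_0} = x_k$. The goal is to bound $\|h\|_2$, and the entire difficulty is concentrated in controlling $h$ on $T_0$ together with a single ``next largest'' block.

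First I would extract the cone constraint from optimality. Since $x$ is feasible for both programs -- equality of measurements in part (1), and $\|Ax - y\|_2 = \|w\|_2 \leq \eta$ in part (2) -- minimality of $\|x^*\|_1$ forces $\|x^*\|_1 \leq \|x\|_1$. Splitting the $\ell_1$ norm across $T_0$ and its complement and applying the reverse triangle inequality yields
\[ \|h_{T_0^c}\|_1 \leq \|h_{T_0}\|_1 + 2\|x_{T_0^c}\|_1, \]
which is the mechanism by which $\ell_1$ minimization forces the error to concentrate on the support of the best $k$-term approximation. Next I would decompose the tail: order $T_0^c$ by decreasing magnitude of the entries of $h$ and partition it into consecutive blocks $T_1, T_2, \dots$ of size $k$. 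The elementary ``shrinking norms'' estimate -- each entry of $h_{T_{j+1}}$ is at most the average of the entries of $h_{T_j}$ -- gives $\sum_{j \geq 2} \|h_{T_j}\|_2 \leq k^{-1/2} \|h_{T_0^c}\|_1$, and combining with the cone inequality above bounds the tail $\ell_2$ mass by $\|h_{T_0\cup T_1}\|_2 + 2k^{-1/2}\|x-x_k\|_1$.

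The crux of the argument, and the step I expect to be the main obstacle, is converting the RIP into a self-improving bound on the head $h_{T_0 \cup T_1}$. The key analytic input is that RIP implies near-orthogonality: for vectors $u, v$ supported on disjoint index sets of total size at most $2k$ one has $|\langle Au, Av \rangle| \leq \delta_{2k}\|u\|_2\|v\|_2$. Expanding $\|Ah_{T_0\cup T_1}\|_2^2 = \langle Ah_{T_0\cup T_1}, Ah\rangle - \sum_{j\geq 2}\langle Ah_{T_0\cup T_1}, Ah_{T_j}\rangle$, I would bound the first inner product by Cauchy--Schwarz together with the upper RIP bound (using $\|Ah\|_2 \leq 2\eta$ in the noisy case and $Ah = 0$ in the noiseless case), bound each remaining term by near-orthogonality after the split $h_{T_0\cup T_1}=h_{T_0}+h_{T_1}$, and invoke the lower RIP bound $\|Ah_{T_0\cup T_1}\|_2^2 \geq (1-\delta_{2k})\|h_{T_0\cup T_1}\|_2^2$. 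Dividing through by $\|h_{T_0\cup T_1}\|_2$ and inserting the tail estimate yields a self-referential inequality
\[ \|h_{T_0\cup T_1}\|_2 \leq \rho\,\|h_{T_0\cup T_1}\|_2 + 2\rho\, k^{-1/2}\|x - x_k\|_1 + \frac{2\sqrt{1+\delta_{2k}}}{1-\delta_{2k}}\,\eta, \qquad \rho := \frac{\sqrt{2}\,\delta_{2k}}{1-\delta_{2k}}. \]
The hypothesis $\delta_{2k} < \sqrt{2}-1$ is exactly the condition making $\rho < 1$, so the leading term on the right can be absorbed and the inequality divided by $1-\rho > 0$ to produce an absolute bound on $\|h_{T_0\cup T_1}\|_2$.

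To finish I would reassemble the pieces. Writing $\|h\|_2 \leq \|h_{T_0\cup T_1}\|_2 + \sum_{j \geq 2}\|h_{T_j}\|_2$ and feeding in both the head bound just obtained and the tail estimate gives an inequality of the form $\|h\|_2 \leq B_0 k^{-1/2}\|x - x_k\|_1 + B_1 \eta$, with explicit constants $B_0, B_1$ depending only on $\delta_{2k}$ through $\rho$ and $\sqrt{1+\delta_{2k}}$. Setting $\eta = 0$ collapses the noise term and recovers part (1), while keeping $\eta$ arbitrary gives part (2). The only genuinely delicate point is the bookkeeping in the near-orthogonality step, where the factor $\sqrt{2}$ (from $\|h_{T_0}\|_2 + \|h_{T_1}\|_2 \leq \sqrt{2}\,\|h_{T_0\cup T_1}\|_2$) is what ties the contraction threshold precisely to $\delta_{2k} < \sqrt{2}-1$.
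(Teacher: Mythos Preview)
The paper does not supply its own proof of this theorem; it is quoted directly from Cand\`es (Theorems~1.1 and~1.2 of the cited reference) and used as a black box. Your outline reproduces that standard argument faithfully --- cone constraint from $\ell_1$ optimality, block decomposition of the tail, near-orthogonality from RIP yielding a self-referential inequality with contraction factor $\rho=\sqrt{2}\,\delta_{2k}/(1-\delta_{2k})<1$ --- and is correct.
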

\begin{remark} Part $(1)$ implies that the recovery is exact if $x$ is $k$-sparse. Part $(2)$ implies that the magnitude of the recovery error is essentially bounded by the size $\eta$ of the measurement error when $x$ is $k$-sparse.
\end{remark}
\begin{remark} The explicit values of the rather small constants $B_0$ and $B_1$ above appear in the proofs of~\cite{Candes} (see also Remark after Theorem 1.2 in ~\cite{Candes}).
\end{remark}

\section{Compressive sensing of point measures on spheres}
\label{Sec: BasicCS}
By a point measure in the unit sphere $S:=S^{n-1}\subseteq \RR^n$ we mean a linear combination $\nu:=\sum c_i\delta_{q_i}$ of Dirac delta measures centered at $N$ distinct points $q_1,\dots, q_N\in S$ with coefficients $c_i\in \RR_+$. We say that the measure $\nu$ is supported on the set $T\subseteq \{q_1,\dots, q_N\}$ if $c_i=0$ for all $i$ with $q_i\not\in T$ and denote by $c_{\nu}\in \RR^N$ the vector of coefficients. 

\begin{definition} If $f_1,\dots, f_m$ are a sequence of real-valued functions on the sphere then the vector of moments of $\nu$ with respect to the $f_i$'s is the vector $b_\nu\in\mathbb{R}^m$ with components $(b_{\nu})_j =\int_Sf_jd\nu=\sum c_i f_j(q_i)$. By the vector of moments of degree $d$ of $\nu$ we mean the special case when the functions are the monomials of total degree at most $d$ in $n$ variables in the lexicographic order.\end{definition} 

In this article we will fix a set of points $C=\{q_1,\dots, q_N\}\subseteq S$, a sequence of functions $f_1,\dots, f_m$ and let $\mu:=\sum_{i=1}^kc_i \delta_{y_i}$ for some $y_i\in S$ and $c_i\in \RR_+$ and study the following two problems:

\begin{enumerate}
\item {\it Exact Recovery.} Recover the measure $\mu$ from its vector of moments $b_{\mu}$, knowing that the points $\{y_i\}_{i=1}^k$ which support $\mu$ are an (unknown) subset of the grid points $\{q_i\}_{i=1}^N$.
\item {\it Approximate Recovery.} Given the vector of moments $b_{\mu}$ of $\mu$, find a measure $\mu^*$, supported on the points $q_i$ which is close to the best approximation $\nu$ to $\mu$ supported on a set of at most $k$ of the grid points $q_i$. By approximate recovery we mean finding a vector $c^*\in \RR^N$ which differs from $c_\nu$ by a small error in the $\ell_2$ norm.
\end{enumerate}

Our fist Theorem relates the two problems above with the compressed sensing framework. In particular it shows that under certain assumptions, both problems can be addressed via convex optimization. 

\begin{theorem*}[{\bf A}] Let $M$ be the $m\times N$ matrix given by $M_{ij}=f_i(q_j)$. Assume the inequality $\delta_{2k}(M)<\sqrt{2}-1$ holds. Then there exists a constant $B_1$ such that
\begin{enumerate}
\item{{\bf Exact recovery:} If ${\rm supp}(\mu)\subseteq C$ and $c^*$ is a minimizer of the problem 
\[\min \|c\|_1\text{ subject to $Mc=b_\mu$.}\] then $\mu=\sum_{i=1}^N c^*_i\delta_{q_i}$.}
\item{{\bf Approximate recovery:} Assume there exists a measure $\nu=\sum_{i=1}^k (c_\nu)_i\delta_{q_{j(i)}}$ supported on $C$ with $\|b_{\mu}-b_{\nu}\|_2\leq \tau$. If $c^*$ is a minimizer of the problem
\[\min \|c\|_1\text{ subject to $\|Mc-b_\mu\|_2\leq 
\tau$.}\]
then $\|c^*-c_\nu\|_2\leq B_1\tau$.
}
\end{enumerate}
\end{theorem*}
\begin{proof} If $c\in \mathbb{R}^N$ then the vector $Mc$ equals the vector of moments with respect to the functions $f_j$ of the measure $\sum_{i=1}^N c_i\delta_{q_i}$. The first claim is thus  a direct consequence of Theorem~\ref{RIPrelevance} part $(1)$. For the second claim let $c_\nu$ be the vector of coefficients of any $k$-sparse measure $\nu$. Letting $w=b_\mu-b_\nu$ we see that the equality $b_\mu = Mc_\nu +w$ holds and that $\|w\|_2\leq \tau$. It follows from Theorem~\ref{RIPrelevance} part $(2)$ that a minimizer $c^*$ of the problem in part $(2)$ above satisfies $\|c^*-c_\nu\|_2\leq B_1\tau$ because $c_\nu$ is $k$-sparse.
\end{proof}

The quality of the previous algorithm depends on how adequate for compressive sensing is the measurement matrix $M$. The answer will depend on the points $q_i$ and on the functions $f_j$ and is, in general, a difficult problem, in the sense that computing the restricted isometry constants $\delta_{2k}(M)$ of a matrix $M$ is an NP-hard problem~\cite{CCRip}.  In the next section we will prove that it is possible to find good functions for any point set by sampling random polynomials with a carefully chosen measure.

\section{Random polynomials for compressive sensing of point measures.}
\label{Sec:Random}
We begin by defining a probability measure on the space of polynomials of degree at most $d$. It is shown in ~\cite[Part II]{Kostlan} that there is an orthogonally invariant probability measure on homogeneous polynomials of degree $d$ for which the coefficients are independent and moreover that this measure is unique up to a common scaling of the coefficients. Our measure is obtained by a weighted combination of such invariant measures.

\begin{definition} For a multi-index $\alpha$ with $|\alpha|\leq d$ define 
\[\binom{d}{\alpha}:=\frac{d!}{(d-|\alpha|)!\prod(\alpha_i!)}\]
and let $A_{\alpha}$ be a normal random variable with mean zero and variance $\frac{\binom{d}{\alpha}}{2^d}$. Assume moreover that the random variables $A_{\alpha}$ are independent for distinct multi-indices $\alpha$. Let $P$ be the random polynomial of degree at most $d$ given by 
\[ P(x):=\sum_{\alpha: |\alpha|\leq d} A_{\alpha}x^{\alpha}.\]
We will refer to these random polynomials as Kostlan-Shub-Smale polynomials of degree $d$.
\end{definition}
\begin{definition}
Given the set of points $C=\{q_1,\dots, q_N\}\subseteq S^{n-1}$ and a positive integer $m$ let $P_1(x),\dots, P_m(x)$ be an independent sample of $m$ Kostlan-Shub-Smale polynomials and let $X$ be the $m\times N$ matrix given by $X_{ij}:=P_i(q_j)$. Define the normalized measurement matrices $\Phi:=\frac{1}{\sqrt{m}}X$.
\end{definition}

\begin{remark} Note that the matrix $X$ depends on the integers $d$ and $m$ and on the chosen set of points $C=\{q_1,\dots, q_N\}$, however to ease the notation we will write $X$ in place of $X(d,m,C)$.
\end{remark}

The next Lemma summarizes the main statistical properties of the random matrices $\Phi$.

\begin{lemma}\label{Basic} The following statements hold: 
\begin{enumerate}
\item The vector $(P(q_j))_{1\leq j\leq N}$ is normally distributed and has mean zero. Its variance-covariance matrix is the matrix $V$ with $V_{st}=\left(\frac{1+\langle q_s, q_t\rangle}{2}\right) ^d$. In particular, for any vector $c\in \mathbb{R}^N$ we have
\[\mathbb{E}\left[\left\|\Phi c\right\|_2^2\right]=c^tVc\]

\item For any set $S\subseteq [N]$, the matrix $X_S^tX_S$ has the Wishart distribution $W(V_{S},m)$ where $V_S$ is the matrix obtained from $V$ by restriction to the rows and columns indexed by the elements of $S$.
\end{enumerate}
\end{lemma}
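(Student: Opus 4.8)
The plan is to compute the distribution of the random vector $(P(q_j))_{1\le j\le N}$ directly from the definition of the Kostlan-Shub-Smale polynomial, and then to identify the Wishart distribution for the Gram matrices $X_S^t X_S$ as an immediate consequence. For part (1), I would first observe that $P(q_j)=\sum_{\alpha:|\alpha|\le d}A_\alpha q_j^\alpha$ is a finite linear combination of the jointly Gaussian (in fact independent) variables $A_\alpha$, hence the vector $(P(q_1),\dots,P(q_N))$ is jointly Gaussian with mean zero, since each $A_\alpha$ has mean zero. It then remains only to compute the covariances $\mathbb E[P(q_s)P(q_t)]$. Using independence of the $A_\alpha$ and $\mathbb E[A_\alpha^2]=\binom{d}{\alpha}/2^d$ we get
\[
\mathbb E[P(q_s)P(q_t)]=\sum_{|\alpha|\le d}\frac{\binom{d}{\alpha}}{2^d}\,q_s^\alpha q_t^\alpha=\frac{1}{2^d}\sum_{|\alpha|\le d}\binom{d}{\alpha}(q_s q_t)^\alpha,
\]
where $q_sq_t$ denotes the componentwise product. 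The key algebraic fact is the multinomial-type identity $\sum_{|\alpha|\le d}\binom{d}{\alpha}y^\alpha=(1+y_1+\dots+y_n)^d$, which follows by applying the multinomial theorem to $(1+y_1+\dots+y_n)^d$ and matching the term with exponent vector $(d-|\alpha|,\alpha_1,\dots,\alpha_n)$ to the definition of $\binom{d}{\alpha}$. Plugging in $y=q_sq_t$ and using $\sum_i (q_s)_i(q_t)_i=\langle q_s,q_t\rangle$ gives $\mathbb E[P(q_s)P(q_t)]=\bigl(\frac{1+\langle q_s,q_t\rangle}{2}\bigr)^d$, which is exactly $V_{st}$. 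In particular $V_{ss}=1$ since $\|q_s\|=1$, consistent with the normalization. The identity $\mathbb E\|\Phi c\|_2^2 = c^t V c$ then follows: the rows of $X$ are i.i.d.\ copies of $(P(q_j))_j$, so $\mathbb E[X^tX]=mV$ and $\mathbb E\|\Phi c\|_2^2 = \tfrac1m\,c^t\mathbb E[X^tX]\,c = c^tVc$.

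For part (2), I would recall the definition of the Wishart distribution: if $Y$ is an $m\times p$ matrix whose rows are i.i.d.\ $N(0,\Sigma)$ random vectors in $\RR^p$, then $Y^tY\sim W(\Sigma,m)$. Fix $S\subseteq[N]$ with $|S|=p$. The $j$-th row of $X_S$ is the restriction of $(P_j(q_i))_{i\in[N]}$ to the coordinates in $S$; since the full vector is $N(0,V)$ by part (1), its restriction to $S$ is $N(0,V_S)$, and the rows are independent because the polynomials $P_1,\dots,P_m$ are sampled independently. Hence $X_S^tX_S\sim W(V_S,m)$ by definition.

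I do not anticipate a serious obstacle here; this is essentially a direct computation. The one step that requires a little care is the combinatorial identity $\sum_{|\alpha|\le d}\binom{d}{\alpha}y^\alpha=(1+y_1+\dots+y_n)^d$ and, relatedly, checking that the variance normalization $\binom{d}{\alpha}/2^d$ is precisely what produces the clean covariance kernel $\bigl(\frac{1+\langle q_s,q_t\rangle}{2}\bigr)^d$ — in particular that the constant term $\alpha=0$ contributes $1/2^d$ and pairs correctly with the $d$-fold expansion. Everything else (joint Gaussianity, mean zero, independence of rows, the passage to the Wishart distribution) is formal.
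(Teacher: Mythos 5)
Your proposal is correct and follows essentially the same route as the paper: joint Gaussianity and mean zero from linearity, the covariance computed via the multinomial identity (the paper organizes it by first splitting $\binom{d}{\alpha}=\binom{d}{|\alpha|}\binom{|\alpha|}{\alpha}$ and summing over the total degree, while you apply the multinomial theorem to $(1+y_1+\dots+y_n)^d$ in one step, which is the same identity), and the Wishart claim read off directly from the definition since the rows of $X_S$ are i.i.d.\ $N(0,V_S)$.
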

\begin{proof}$(1)$ The random variable $P(q_j)$ is a linear combination of normal random variables with mean zero. It is therefore normal and has mean zero. Its variance-covariance matrix is given by
\[V_{st}=\mathbb{E}\left[P(q_s)P(q_t)\right]=\mathbb{E}\left[\left(\sum_{\alpha: |\alpha|\leq d} A_{\alpha}x^{\alpha}(q_s)\right)\left(\sum_{\beta: |\beta|\leq d} A_{\beta}x^{\beta}(q_t)\right)\right]=\]
\[=\sum_{\alpha,\beta} \mathbb{E}\left[A_{\alpha}A_{\beta}\right]x^{\alpha}(q_s)x^{\beta}(q_t)=\frac{1}{2^d}\sum_{\alpha: |\alpha|\leq d} \binom{d}{\alpha}x^{\alpha}(q_s)x^{\alpha}(q_t)\]
where the equality follows from the fact that the random variables $A_\alpha$ and $A_\beta$ have mean zero and are independent for $\alpha\neq \beta$. Since $\binom{d}{\alpha}=\binom{d}{|\alpha|}\binom{|\alpha|}{\alpha}$, the last quantity equals 
\[\frac{1}{2^d}\sum_{k=0}^d \binom{d}{k}\left(\sum_{\alpha: |\alpha|=k} \binom{d}{\alpha}x^{\alpha}(q_s)x^{\alpha}(q_t)\right)=\frac{1}{2^d}\sum_{k=0}^d\binom{d}{k}\left(\sum_{r=1}^nx_r(q_s)x_r(q_t)\right)^d=\]
\[=\frac{1}{2^d}\sum_{k=0}^d\binom{d}{k}\langle q_s,q_t\rangle^d = \left(\frac{1+\langle q_s,q_t\rangle }{2}\right)^d\]
proving the claim. $(2)$ The $m$ rows of the matrix $X_S$ are independently drawn from an $|S|$-variate normal distribution with zero mean and variance-covariance matrix obtained from $V$ by restricting to the rows and columns indexed by elements of $S$. The distribution of $X_S^tX_S$ is thus, by definition, the Wishart distribution $W(V_{S},m)$.

\end{proof}

\begin{remark} The previous Lemma shows that our measure is ``normalized" so that $\EE[P(x)^2]=1$ for every point $x$ in the unit sphere. This explains our choice of $2^d$ in the denominator. \end{remark}

By the previous Lemma, the expected value of the matrix $\Phi_S^t\Phi_S$ is precisely $V_S$. The following Lemma shows that, if the $q_i$ are not too close together, in the sense that the the cosine of the angle between every two distinct vectors is bounded above by a number $\alpha<1$ then the eigenvalues of the matrices $V_S$ concentrate around one very quickly as $d$ increases. As a result, the average of the matrices $\Phi_S^t\Phi_S$ has all its eigenvalues close to one. In the following section we will use concentration inequalities to show that this implies a similar behavior for $\Phi_S^t\Phi_S$ for all subsets $S$ of a given size with high probability.

\begin{lemma}\label{Gershgorin} If $S\subseteq [N]$ has cardinality $k$ and for $i,j\in S$ with $i\neq j$ we have $\langle q_i\cdot q_j\rangle \leq \alpha < 1$ then the following eigenvalue inequalities hold:
\begin{enumerate}
\item $\lambda_{\max}(V_S)\leq 1+(k-1)\left(\frac{1+\alpha}{2}\right) ^d$
\item $\lambda_{\min}(V_S)\geq 1-(k-1)\left(\frac{1+\alpha}{2}\right) ^d$
\end{enumerate}
In particular, the eigenvalues of $V_S$ concentrate around one as $d\rightarrow \infty$.
\end{lemma}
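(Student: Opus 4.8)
The plan is to bound the eigenvalues of $V_S$ directly via the Gershgorin circle theorem. Recall from Lemma~\ref{Basic} that $(V_S)_{ii} = \left(\frac{1+\langle q_i,q_i\rangle}{2}\right)^d = 1$ for all $i\in S$ since the $q_i$ lie on the unit sphere, and $(V_S)_{ij} = \left(\frac{1+\langle q_i,q_j\rangle}{2}\right)^d$ for $i\neq j$. The hypothesis $\langle q_i,q_j\rangle \leq \alpha < 1$ gives $0 \le \frac{1+\langle q_i,q_j\rangle}{2} \le \frac{1+\alpha}{2} < 1$ for $i \ne j$; note we also need a lower bound so that $\left|\frac{1+\langle q_i,q_j\rangle}{2}\right|^d \le \left(\frac{1+\alpha}{2}\right)^d$, which holds because $\langle q_i,q_j\rangle \ge -1$ forces the entry to be nonnegative. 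Hence every off-diagonal entry has absolute value at most $\left(\frac{1+\alpha}{2}\right)^d$.

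The next step is to apply Gershgorin to the $i$-th row: every eigenvalue $\lambda$ of $V_S$ satisfies $|\lambda - (V_S)_{ii}| \le \sum_{j\in S, j\neq i} |(V_S)_{ij}|$ for some $i$. Since there are $k-1$ off-diagonal entries in each row and each is bounded by $\left(\frac{1+\alpha}{2}\right)^d$, this sum is at most $(k-1)\left(\frac{1+\alpha}{2}\right)^d$. Combined with $(V_S)_{ii}=1$, every eigenvalue $\lambda$ lies in the interval $\left[1-(k-1)\left(\frac{1+\alpha}{2}\right)^d,\ 1+(k-1)\left(\frac{1+\alpha}{2}\right)^d\right]$, which gives both claimed inequalities for $\lambda_{\max}$ and $\lambda_{\min}$ at once. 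Finally, since $0 \le \frac{1+\alpha}{2} < 1$, the quantity $(k-1)\left(\frac{1+\alpha}{2}\right)^d \to 0$ as $d\to\infty$ for fixed $k$ and $\alpha$, so both bounds converge to $1$, establishing the concentration statement.

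I do not expect any real obstacle here; this is a short routine argument. The only point that requires a moment's care is confirming that the off-diagonal entries are nonnegative (so that raising to the $d$-th power preserves the inequality with $\frac{1+\alpha}{2}$ rather than introducing sign issues), which follows from Cauchy–Schwarz on the unit sphere. One could alternatively phrase the same bound by noting that $V_S = I + E$ where $E$ has zero diagonal and $\|E\|_{\mathrm{op}} \le \|E\|_{\infty} \le (k-1)\left(\frac{1+\alpha}{2}\right)^d$ by the row-sum bound on the operator norm, but the Gershgorin phrasing is the cleanest.
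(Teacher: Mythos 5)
Your proof is correct and follows exactly the same route as the paper's: diagonal entries equal one because the points lie on the unit sphere, off-diagonal entries are bounded in absolute value by $\left(\frac{1+\alpha}{2}\right)^d$, and the Gershgorin circle theorem gives both eigenvalue bounds at once. Your additional remark that the off-diagonal entries are nonnegative (so no sign issues arise when raising to the $d$-th power) is a small point of care the paper glosses over, but it does not change the argument.
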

\begin{proof} Since the points $q_i$ lie in the unit sphere, the diagonal entries of the matrix $V_S$ equal one. By our assumption on the $q_i$, the off-diagonal entries of $V$ have absolute value at most $\left(\frac{1+\alpha}{2}\right) ^d$. By the Gershgorin circle Theorem we conclude that the eigenvalues of $V_S$ are contained in the circle centered at one and with radius $(k-1)\left(\frac{1+\alpha}{2}\right) ^d$ proving the claim.
\end{proof}

\subsection{A probabilistic algorithm for compressive sensing of point measures.}

Let $k$ be any integer and let $\delta$ be a real number in $(0,1)$. In this section we estimate the probability of the set of $m\times N$ matrices $\Phi$ for which $\delta_{2k}(\Phi)>\delta$ as a function of $d$ and $m$. Our main result is Theorem~\hyperref[main]{B} showing that this probability decreases quickly as $m$ and $d$ grow. These estimates will lead to Corollary~\ref{ExactRecovery} which gives a probabilistic algorithm for compressive sensing of point measures. Our proof adapts the proof proposed by Baraniuk, Davenport, DeVore and Wakin in~\cite{BDDW} of the classical results on compressive sensing to the present context.

\begin{lemma}\label{Concentration} For any $c\in \RR^N$ and any real number $0<\eta <1$ the following inequality holds,

\[\PP\left\{ \left| \|\Phi(\omega) c\|_2^2 - c^tVc \right|\geq \eta c^tVc\right\}\leq  2e^{-m c_0(\eta)}\]
where $c_0(\eta):=\min\left(\frac{1}{2}\log\left(\frac{1}{1+\eta}\right)+\frac{\eta}{2}, \frac{1}{2}\log\left(\frac{1}{1-\eta}\right)-\frac{\eta}{2}\right).$
\end{lemma}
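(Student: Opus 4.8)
The plan is to reduce the statement to a one‑dimensional concentration inequality for a sum of squares of i.i.d.\ standard Gaussians (a chi‑squared random variable with $m$ degrees of freedom), and then apply a standard Chernoff/large‑deviation bound. First I would fix $c\in\RR^N$ with $c^tVc>0$ (the case $c^tVc=0$ forces $\Phi c=0$ a.s.\ by Lemma~\ref{Basic}(1), so there is nothing to prove). By Lemma~\ref{Basic}(1), for each row index $i\in[m]$ the scalar $Y_i:=\sum_{j}X_{ij}c_j=P_i(q)\!\cdot\! c$ (abusing notation, the inner product of $c$ with the $i$-th sampled evaluation vector) is a mean‑zero Gaussian with variance exactly $c^tVc$, and the $Y_i$ are independent across $i$ because the polynomials $P_i$ are sampled independently. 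Writing $Z_i:=Y_i/\sqrt{c^tVc}$, the $Z_i$ are i.i.d.\ standard normals and
\[
\|\Phi c\|_2^2=\frac1m\sum_{i=1}^m Y_i^2=\frac{c^tVc}{m}\sum_{i=1}^m Z_i^2 ,
\]
so $\|\Phi c\|_2^2/(c^tVc)=\frac1m W$ with $W=\sum_{i=1}^m Z_i^2\sim\chi^2_m$, and $\EE[W]=m$. The event in question is exactly $\{|W-m|\ge \eta m\}$, i.e.\ $\{W\ge(1+\eta)m\}\cup\{W\le(1-\eta)m\}$.

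Next I would bound each tail by the exponential Markov inequality using the moment generating function of a chi‑squared variable, $\EE[e^{tW}]=(1-2t)^{-m/2}$ for $t<1/2$. For the upper tail, for any $0<t<1/2$,
\[
\PP\{W\ge(1+\eta)m\}\le e^{-t(1+\eta)m}(1-2t)^{-m/2}
=\exp\!\Bigl(-\tfrac m2\bigl(2t(1+\eta)+\log(1-2t)\bigr)\Bigr).
\]
Optimizing the exponent over $t$ gives $1-2t=1/(1+\eta)$, i.e.\ $t=\tfrac\eta{2(1+\eta)}\in(0,1/2)$, and the exponent becomes $\tfrac m2\bigl(\eta-\log(1+\eta)\bigr)$. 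Hence
\[
\PP\{W\ge(1+\eta)m\}\le \exp\!\Bigl(-m\bigl(\tfrac\eta2-\tfrac12\log(1+\eta)\bigr)\Bigr)
= e^{-m\,a(\eta)},\qquad a(\eta):=\tfrac12\log\!\Bigl(\tfrac1{1+\eta}\Bigr)+\tfrac\eta2 .
\]
Similarly, for the lower tail one uses $t<0$: for $t>0$,
\[
\PP\{W\le(1-\eta)m\}=\PP\{e^{-tW}\ge e^{-t(1-\eta)m}\}\le e^{t(1-\eta)m}(1+2t)^{-m/2},
\]
and optimizing (set $1+2t=1/(1-\eta)$, valid since $0<\eta<1$) yields the exponent $\tfrac m2\bigl(-\eta-\log(1-\eta)\bigr)$, so
\[
\PP\{W\le(1-\eta)m\}\le e^{-m\,b(\eta)},\qquad b(\eta):=\tfrac12\log\!\Bigl(\tfrac1{1-\eta}\Bigr)-\tfrac\eta2 .
\]
Both $a(\eta)$ and $b(\eta)$ are nonnegative for $\eta\in(0,1)$ (by $\log(1+x)\le x$ and $-\log(1-x)\ge x$), so the bounds are nontrivial.

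Finally, a union bound over the two tails gives
\[
\PP\{|W-m|\ge\eta m\}\le e^{-m\,a(\eta)}+e^{-m\,b(\eta)}\le 2e^{-m\min(a(\eta),b(\eta))}=2e^{-m\,c_0(\eta)},
\]
which is precisely the claimed inequality since $c_0(\eta)=\min(a(\eta),b(\eta))$. I do not anticipate a genuine obstacle here; the only point requiring a little care is the clean identification, via Lemma~\ref{Basic}(1), that $\|\Phi c\|_2^2$ is an \emph{exact} scaled $\chi^2_m$ variable (not merely approximately so) — this is what makes the two‑sided Chernoff computation go through verbatim and produce the stated $c_0(\eta)$ — together with checking that the optimizing parameters $t$ stay in the admissible ranges $(0,1/2)$ and $(0,\infty)$ for all $\eta\in(0,1)$.
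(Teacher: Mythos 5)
Your proposal is correct and follows essentially the same route as the paper: identify $m\|\Phi c\|_2^2/(c^tVc)$ as an exact $\chi^2_m$ variable via Lemma~\ref{Basic}(1), apply the Chernoff bound with the chi-squared moment generating function to each tail with the same optimal parameter $t^*=\eta/(2(1+\eta))$, and finish with a union bound. The only additions are cosmetic (handling the degenerate case $c^tVc=0$ and checking admissibility of the optimizing $t$), so there is nothing further to reconcile.
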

\begin{proof} The components of $\Phi(\omega)c$ are independent normal random variables with mean zero and common variance $\frac{c^tVc}{m}$. It follows that $Z:=\frac{m\|\Phi(\omega)c\|^2}{c^tVc}$ has a Chi-squared distribution with $m$ degrees of freedom. It follows that for any $t<\frac{1}{2}$ the equality $\EE[e^{tZ}]=\frac{1}{(1-2t)^{\frac{m}{2}}}$ holds. As a result, for every real number $\alpha$ and $t>0$  we have
\[\PP\{ Z>\alpha \}=\PP\{e^{tZ}>e^{t\alpha}\}\leq \EE[e^{tZ}]e^{-t\alpha }=\frac{e^{-t\alpha}}{(1-2t)^\frac{m}{2}} \]
If $\alpha=m(1+\eta)$ then the right hand side equals $e^{m\phi(t)}$ where 
\[\phi(t)=-\frac{1}{2}\log(1-2t) -t(1+\eta)\]
The function $\phi(t)$ is strictly convex and $\phi'(t^*)=0$ when $t^*=\frac{\eta}{2(1+\eta)}$. Setting $t=t^*$ in the above formula we obtain an upper bound of
$\exp\left(-m\left(\frac{1}{2}\log(\frac{1}{1+\eta})+\frac{\eta}{2}\right)\right)$. It is shown similarly that for $\eta>0$
\[\PP\left\{Z\leq m(1-\eta)\right\}\leq \exp\left(-m\left(\frac{1}{2}\log\left(\frac{1}{1-\eta}\right)-\frac{\eta}{2}\right)\right)\]
The claimed inequality now follows immediately from the union bound and the definition of $Z$.
  
\end{proof}

We are now ready to prove the main result of this section as stated in the introduction.

\begin{theorem*}[{\bf B}] For any real number $0<\delta<\frac{1}{2} $, any integer $k$, and all sufficiently large $d$ the following inequality holds
\[ \PP\left\{\delta_{2k}(\Phi)>\delta\right\}< \binom{N}{2k}\left(\frac{30}{\delta}\right)^{2k}2e^{-mc_0\left(\frac{\delta}{6}\right)}\] 
In particular, for all sufficiently large $d$, there exist matrices $\Phi$ such that $\delta_{2k}(\Phi)<\delta$ whenever 
\[m\geq \frac{2k\log\left(\frac{30eN}{2k\delta}\right)+\log(2)}{c_0\left(\frac{\delta}{6}\right)}\]
\end{theorem*}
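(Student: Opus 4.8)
The plan is to follow the standard $\varepsilon$-net argument of Baraniuk--Davenport--DeVore--Wakin \cite{BDDW}, modified to account for the fact that the concentration in Lemma~\ref{Concentration} is around $c^tVc$ rather than around $\|c\|_2^2$. First I would fix a subset $S\subseteq[N]$ with $|S|=2k$ and reduce the problem to controlling $\bigl|\|\Phi c\|_2^2-\|c\|_2^2\bigr|$ uniformly over the unit sphere $\Sigma_S$ of the coordinate subspace $\RR^S$. Since $\Sigma_S$ is a $(2k-1)$-sphere, there is a finite set $Q_S\subseteq\Sigma_S$ with $|Q_S|\le(1+2/\rho)^{2k}$ which is a $\rho$-net of $\Sigma_S$ in the $\ell_2$ metric; I would take $\rho$ to be an absolute fraction (the constant $30$ in the statement is what falls out of choosing $\rho$ so that $(1+2/\rho)\le 30/\delta$ after the various slack factors are paid), and then $(1+2/\rho)^{2k}\le(30/\delta)^{2k}$.

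Next, for each fixed $q\in Q_S$ I would apply Lemma~\ref{Concentration} with $c=q$ and $\eta=\delta/6$: with probability at least $1-2e^{-mc_0(\delta/6)}$ we have $\bigl|\|\Phi q\|_2^2-q^tVq\bigr|\le(\delta/6)\,q^tVq$. Here is where ``all sufficiently large $d$'' enters: Lemma~\ref{Gershgorin} guarantees that once $d$ is large enough (depending on $k$ and on the separation bound $\alpha<1$ of the points of $C$), all eigenvalues of every $V_S$ lie within $\delta/6$, say, of $1$, so $q^tVq\in[1-\delta/6,\,1+\delta/6]$ for all unit $q$ supported on any such $S$. Combining, on this good event $\bigl|\|\Phi q\|_2^2-\|q\|_2^2\bigr|$ is at most a small multiple of $\delta$ — tracking the constants, one arranges it to be $\le\delta/2$ — simultaneously for all $q\in Q_S$, after a union bound over the $|Q_S|\le(30/\delta)^{2k}$ net points. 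Then a union bound over the $\binom{N}{2k}$ choices of $S$ gives the displayed probability estimate $\binom{N}{2k}(30/\delta)^{2k}\,2e^{-mc_0(\delta/6)}$ for the complementary (bad) event.

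It then remains to upgrade control on the net to control on the whole sphere. Writing $B_S:=\Phi_S^t\Phi_S-I$, which is symmetric, let $A:=\|B_S\|_{op}=\sup_{x\in\Sigma_S}|x^tB_Sx|$. For arbitrary $x\in\Sigma_S$ pick $q\in Q_S$ with $\|x-q\|_2\le\rho$; expanding $x^tB_Sx=q^tB_Sq+(x-q)^tB_S(x+q)$ and bounding the cross term by $2\rho A$ (since $\|x+q\|_2\le 2$) yields $A\le \delta/2+2\rho A$, hence $A\le \tfrac{\delta/2}{1-2\rho}$; choosing $\rho$ small enough that $\tfrac{\delta/2}{1-2\rho}\le\delta$ closes the loop. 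This shows $\delta_{2k}(\Phi)\le\delta$ off the bad event, proving the inequality. For the final ``in particular'' clause I would simply impose that the right-hand side be $<1$: using $\binom{N}{2k}\le(eN/2k)^{2k}$ gives $\binom{N}{2k}(30/\delta)^{2k}2e^{-mc_0(\delta/6)}\le 2\bigl(\tfrac{30eN}{2k\delta}\bigr)^{2k}e^{-mc_0(\delta/6)}$, and this is $<1$ exactly when $m c_0(\delta/6)>2k\log\bigl(\tfrac{30eN}{2k\delta}\bigr)+\log 2$, which is the stated bound; any $\Phi$ outside the bad event then has $\delta_{2k}(\Phi)<\delta$.

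The main obstacle is bookkeeping rather than conceptual: one must choose the net radius $\rho$ and the concentration parameter $\eta$ (here $\delta/6$) and the eigenvalue tolerance from Lemma~\ref{Gershgorin} so that the accumulated slack — the factor $1/(1-2\rho)$ from the net-to-sphere step, the deviation of $q^tVq$ from $1$, and the $\eta\,q^tVq$ concentration error — still sums to at most $\delta$, while simultaneously keeping the net cardinality bound at the clean form $(30/\delta)^{2k}$. The one genuinely new ingredient compared with \cite{BDDW} is invoking Lemma~\ref{Gershgorin} to replace the exact isometry $\EE\|\Phi c\|_2^2=\|c\|_2^2$ (which fails here) by an approximate one valid for $d$ large; this is why the statement is only asymptotic in $d$.
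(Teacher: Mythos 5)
Your proposal is correct and follows essentially the same route as the paper: the Baraniuk--Davenport--DeVore--Wakin net argument, with Lemma~\ref{Concentration} (at $\eta=\delta/6$) giving concentration around $q^tVq$, Lemma~\ref{Gershgorin} pulling $q^tVq$ to within the required tolerance of $\|q\|_2^2$ for large $d$, and union bounds over the net and over the $\binom{N}{2k}$ supports. The only cosmetic difference is in the net-to-sphere extension, where you bound the operator norm of $\Phi_S^t\Phi_S-I$ via the polarization identity while the paper runs the self-bounding triangle-inequality argument on $\sqrt{1+A}$ directly; both close the same loop with the same constants.
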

\begin{proof} Fix a set $T\subseteq [N]$ with $|T|=2k$. From the theory of covering numbers it is well-known that there exists a set of points $Y\subseteq S^{N-1}\subseteq \RR^N$ such that:
\begin{enumerate}
\item The points of $Y$ are supported on $T$.
\item For every $z\in S^{N-1}$ with support on $T$ we have $\inf_{y\in Y}\|y-z\|_2<\frac{\delta}{5}$
\item $|Y|\leq (30/\delta)^{2k}$.
\end{enumerate}
By Lemma~\ref{Concentration} and a union bound the probability of the $\omega\in \Omega$ such that  
\[|\|\Phi(\omega)y\|_2^2-y^tVy|>\frac{\delta}{6}y^tVy\] for some $y\in Y$ is bounded above by $\left(\frac{30}{\delta}\right)^{2k}2e^{-mc_0\left(\frac{\delta}{6}\right)}$.
Moreover, by Lemma~\ref{Gershgorin} there exists an integer $d_0$ such that for $d>d_0$ the following two inequalities hold. 
\[
\begin{array}{l}
\left(1+\frac{\delta}{6}\right)\lambda_{\max}(V)\leq 1+\frac{\delta}{5}\\ 
\left(1-\frac{\delta}{6}\right)\lambda_{\min}(V)\geq 1-\frac{\delta}{5}
\end{array}
\]

We conclude that for all such $d$ the probability of the event $E_T$, consisting of the $\omega\in \Omega$ such that 
\[|\|\Phi(\omega)y\|_2^2-\|y\|^2|>\frac{\delta}{5}\|y\|^2\] for some $y\in Y$ is bounded above by $\left(\frac{30}{\delta}\right)^{2k}2e^{-mc_0\left(\frac{\delta}{6}\right)}$
We will show that if $\omega\not\in E_T$ then the inequality 
\[(1-\delta)\|c\|^2<\|\Phi(\omega)c\|_2^2<(1+\delta)\|c\|^2\]
holds for every $c$ supported on $T$.
To this end, let $A$ be the smallest real number such that for every $c$ supported on $T$, the inequality $\|\Phi(\omega)c\|_2\leq \sqrt{1+A}\|c\|_2$
holds. We will show that $A < \delta$ by estimating $\|\Phi(\omega)c\|_2$ for $c\in S^{N-1}\subseteq\RR^N$ with support on $T$. If $y^*\in Y$ is such that $\|c-y\|_2\leq \delta/5$ then the following inequalities hold 
\[ \|\Phi(\omega)c\|_2\leq \|\Phi(\omega)y^*\|_2+ \|\Phi(\omega)(c-y^*)\|_2\leq \sqrt{1+\delta/5}+\sqrt{1+A}(\delta/5) \]
From the definition of $A$ it follows that the inequality 
\[\sqrt{1+A}\leq \sqrt{1+\delta/5}+\sqrt{1+A}(\delta/5) \]
holds and thus
\[A\leq \frac{(1+\frac{\delta}{5})}{(1-\frac{\delta}{5})^2}-1\leq \frac{\frac{3\delta}{5}}{1-2\frac{\delta}{5}}<\delta\]
so that $\|\Phi(\omega)c\|_2\leq \sqrt{1+\delta}\|c\|$ for every $c$ supported on $T$.
For the opposite inequality we have
\[\|\Phi(\omega)c\|_2\geq \|\Phi(\omega)y^*\|_2- \|\Phi(\omega)(c-y^*)\|_2\geq \sqrt{1-\delta/5}-\sqrt{1+\delta}(\delta/5)=:b\]
and the last quantity $b$ is bounded below by $\sqrt{1-\delta}$ because
\[1-b^2=\frac{\delta}{5}-(1+\delta)(\delta/5)+2(\delta/5)\sqrt{(1-\frac{\delta}{5})(1+\delta)}\leq \frac{\delta(2+\delta)}{5}\leq 3\frac{\delta}{5}<\delta.\]
We conclude that the $\omega\in \Omega$ for which $\delta_{2k}(\Phi)>\delta$ is contained in the union of the $E_T$ as $T$ ranges over the $\binom{N}{2k}$ subsets of $[N]$ of size $2k$ and the Theorem follows from the union bound.  
For the last part recall that $\binom{N}{2k}\leq \left(\frac{Ne}{2k}\right)^{2k}$.

\end{proof}

\begin{remark} The value of the required degree $d$ can be easily estimated from the explicit bound in Lemma~\ref{Gershgorin}.
\end{remark}

\begin{corollary}\label{ExactRecovery} Let $M:=\Phi$ in the recovery algorithms from Theorem~\hyperref[CompressedComputing]{A}. For all sufficiently large $d$, the failure probability is bounded above by 

\[\binom{N}{2k}\left(\frac{30}{\sqrt{2}-1}\right)^{2k}2e^{-mc_0\left(\frac{\sqrt{2}-1}{6}\right)}.\]

\end{corollary}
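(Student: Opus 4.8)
The plan is to combine Theorems~\hyperref[CompressedComputing]{A} and~\hyperref[main]{B} directly. Theorem~\hyperref[CompressedComputing]{A} guarantees that, for a \emph{fixed} measurement matrix $M$, both recovery procedures succeed---exact recovery in part $(1)$ and the error estimate $\|c^*-c_\nu\|_2\leq B_1\tau$ in part $(2)$---as soon as $\delta_{2k}(M)<\sqrt{2}-1$. Instantiating this with $M:=\Phi$, the event that $\Phi$ fails to satisfy the conclusions of Theorem~\hyperref[CompressedComputing]{A} is contained in $\{\delta_{2k}(\Phi)\geq \sqrt{2}-1\}$, so it suffices to bound the probability of this last event.

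Next I would apply Theorem~\hyperref[main]{B}. Since $\sqrt{2}-1<\tfrac12$, the hypothesis $0<\delta<\tfrac12$ of that theorem holds for $\delta=\sqrt{2}-1$, and also for every $\delta$ slightly below it; moreover the non-clustering constant $\alpha:=\max_{i\neq j}\langle q_i,q_j\rangle$ is automatically $<1$ because the $q_i$ are distinct, so the Gershgorin estimate of Lemma~\ref{Gershgorin} used inside the proof of Theorem~\hyperref[main]{B} is available. Hence, for all sufficiently large $d$---the threshold depending only on $N$, $\alpha$ and $\delta$ through Lemma~\ref{Gershgorin}---Theorem~\hyperref[main]{B} yields
\[\PP\left\{\delta_{2k}(\Phi)>\delta\right\}<\binom{N}{2k}\left(\frac{30}{\delta}\right)^{2k}2e^{-mc_0\left(\frac{\delta}{6}\right)}.\]

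The one point that deserves a little care is the boundary value $\delta_{2k}(\Phi)=\sqrt{2}-1$, since Theorem~\hyperref[main]{B} controls only the \emph{open} event $\{\delta_{2k}(\Phi)>\delta\}$. I would dispose of it by monotonicity: for every $0<\delta<\sqrt{2}-1$ we have $\{\delta_{2k}(\Phi)\geq\sqrt{2}-1\}\subseteq\{\delta_{2k}(\Phi)>\delta\}$, so the failure probability is at most $\binom{N}{2k}(30/\delta)^{2k}2e^{-mc_0(\delta/6)}$; letting $\delta\uparrow\sqrt{2}-1$ and using the continuity of $\delta\mapsto(30/\delta)^{2k}$ and of $c_0$ gives the stated bound with $\delta=\sqrt{2}-1$. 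As the estimate is thus an immediate consequence of the two quoted theorems, I do not foresee a genuine obstacle---the real content was already packaged into Theorem~\hyperref[main]{B}.
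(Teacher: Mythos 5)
Your proposal is correct and follows the same route as the paper, which simply invokes Theorem~\hyperref[CompressedComputing]{A} together with Theorem~\hyperref[main]{B} at $\delta=\sqrt{2}-1$. Your extra care with the boundary event $\{\delta_{2k}(\Phi)=\sqrt{2}-1\}$ is a small refinement the paper glosses over, but it does not change the argument.
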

\begin{proof} Follows immediately from Theorem~\hyperref[CompressedComputing]{A} and the inequality in Theorem~\hyperref[main]{B} with $\delta=\sqrt{2}-1$.
\end{proof}

\begin{remark} While having small restricted isometry constants as above is a sufficient condition for a matrix to be suitable for compressive sensing this condition is by no means necessary. In particular, the restricted isometry property is unable to explain the exact shape of the well-known phase transition phenomena that occur in compressive sensing problems (i.e. the existence of a hard threshold on the number of measurements above which the convex recovery procedure is generally successful and below which the convex recovery procedure is generally unsuccessful). A much more satisfactory approach to these questions is given by classical integral geometry (see for instance~\cite{ALMT}). It would be very interesting to use these methods to better understand phase transitions for the matrices $\Phi$ suggested by our numerical experiments in Figure~\ref{Fig: exactRecovery}.
\end{remark}

The approximate recovery algorithm from the previous corollary can be used to find good approximations of  a measure $\nu$ supported on $k$ of the points $q_i$ which best approximates $\mu$ in the sense that $\|b_\mu-b_\nu\|_2$ is as small as possible. Note, however that the vectors $b_{\bullet}$ depend not only on the measure but also on the sequence of functions we use for computing them. When using the random measurement matrix $\Phi$ it may be thus difficult to interpret the quantity $\|b_\mu-b_\nu\|_2$ which in this setting becomes a random variable. In the following section we give a geometric interpretation for {\it the mean of this random variable} and show that its values concentrate around it allowing us to clarify the outcome of the approximate recovery algorithm.

\subsection{Optimal mean-square error approximations.}
If the sequence of functions $f_i$ used for moment computations is a sequence of random functions $f_i(\omega)$ then we can define the following concept of ``closeness" between point measures.

\begin{definition} Let $f_i(\omega)$, $1\leq i \leq m$ be a random family of real valued functions on the sphere $S$. If $\nu$ and $\mu$ are point measures then we define the mean-squared error between $\nu$ and $\mu$ to be $\EE[\|b_\mu-b_\nu\|_2^2]$.
\end{definition}

The following Lemma shows that when the $f_i(\omega)$ are an independent sample of Kostlan-Shub-Smale polynomials $f_i:=\frac{P(x)}{\sqrt{m}}$  of size $m$ then there is a closed expression for the mean-squared error. Remarkably these expressions depend only on the locations of the points and the degree $d$ of our random polynomials.

More precisely let $\mu=\sum_{i=1}^k r_i \delta_{y_i}$ and suppose $\nu =\sum_{i=1}^N c_i \delta_{q_i}$. Let $V\in \RR^{N^2}$, $A\in \RR^{N\times k}$ and $D\in \RR^{k^2}$ be matrices with entries given by $V_{st}=\left(\frac{1+\langle q_s, q_t\rangle}{2}\right) ^d$, $A_{st}=\left(\frac{1+\langle y_s, q_t\rangle}{2}\right)$ and $D_{s,t} = \left(\frac{1+\langle y_s, y_t\rangle}{2}\right)$ respectively.

\begin{lemma} The mean-squared error $\EE[\|b_\mu-b_\nu\|_2^2]$ is given by the quadratic form
\[\Psi(r,c)=c^tVc -2c^tAr+r^tDr.\]
Moreover, if $S\subseteq [N]$ is a set of size $k$ such that the matrix $V_S$ is invertible then there is a unique signed measure $\nu^*$ supported on $\{q_i:i\in S\}$ for which the mean-squared error is minimized. It's non-zero coefficients are given by $c_S^*:=V_S^{-1}A^Sr$.
\end{lemma}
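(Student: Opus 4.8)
The plan is to reduce the expectation $\EE[\|b_\mu-b_\nu\|_2^2]$ to a single second-moment computation for one Kostlan--Shub--Smale polynomial, evaluate that by expanding a square, and then carry out a routine strictly convex quadratic minimization.

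First I would record the elementary observation that, since $P\mapsto P(x)$ is linear in the coefficients of $P$, the map $L(P):=\sum_{i=1}^k r_iP(y_i)-\sum_{i=1}^N c_iP(q_i)$ is a linear functional on the space of polynomials of degree $\le d$. With $f_j=P_j/\sqrt m$ we then have $(b_\mu-b_\nu)_j=\tfrac1{\sqrt m}L(P_j)$, so
\[
\|b_\mu-b_\nu\|_2^2=\frac1m\sum_{j=1}^m L(P_j)^2 .
\]
Because the $P_j$ are i.i.d.\ copies of a single Kostlan--Shub--Smale polynomial $P$, taking expectations gives $\EE[\|b_\mu-b_\nu\|_2^2]=\EE[L(P)^2]$.

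Next I would compute $\EE[L(P)^2]$ by expanding the square. The computation in the proof of Lemma~\ref{Basic}(1) in fact shows $\EE[P(x)P(x')]=\left(\frac{1+\langle x,x'\rangle}{2}\right)^d$ for \emph{every} pair of unit vectors $x,x'$ — nothing there used that the arguments were grid points, so it applies to the non-grid atoms $y_i$ as well. Hence
\[
\EE[L(P)^2]=\sum_{s,t}r_sr_t\left(\tfrac{1+\langle y_s,y_t\rangle}{2}\right)^d-2\sum_{s,t}c_sr_t\left(\tfrac{1+\langle q_s,y_t\rangle}{2}\right)^d+\sum_{s,t}c_sc_t\left(\tfrac{1+\langle q_s,q_t\rangle}{2}\right)^d ,
\]
and recognizing the three double sums as $r^tDr$, $-2c^tAr$ and $c^tVc$ gives exactly $\Psi(r,c)=c^tVc-2c^tAr+r^tDr$. (Equivalently one can view $b_\nu-b_\mu$ as $\Phi$ applied to the vector $(c,-r)$ attached to the augmented configuration $\{q_1,\dots,q_N,y_1,\dots,y_k\}$ and invoke Lemma~\ref{Basic}(1) for that configuration, whose covariance matrix is block-partitioned with diagonal blocks $V$ and $D$ and off-diagonal block $-A$; expanding the resulting quadratic form gives the same expression.)

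For the minimization, restricting $\Psi(r,\cdot)$ to vectors supported on $S$ amounts to writing $c$ through its subvector $c_S\in\RR^{|S|}$, so that $\Psi(r,c)=c_S^tV_Sc_S-2c_S^tA^Sr+r^tDr$, where $A^S$ is the submatrix of $A$ on the rows indexed by $S$. Since $V$ is the variance--covariance matrix of the Gaussian vector $(P(q_j))_j$ it is positive semidefinite, hence so is its principal submatrix $V_S$; being invertible by hypothesis, $V_S$ is positive definite. Therefore $\Psi(r,\cdot)$ is a strictly convex quadratic in $c_S$ with a unique minimizer determined by the first-order condition $2V_Sc_S-2A^Sr=0$, i.e.\ $c_S^*=V_S^{-1}A^Sr$. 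I do not expect a genuine obstacle here: the only points requiring care are confirming that the covariance identity of Lemma~\ref{Basic} was proved for arbitrary unit vectors, keeping the index conventions for $A$ and $A^S$ straight, and invoking positive definiteness of $V_S$ for the uniqueness claim.
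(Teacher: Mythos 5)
Your proof is correct and follows essentially the same route as the paper: both reduce the computation to the covariance identity $\EE[P(x)P(x')]=\bigl(\tfrac{1+\langle x,x'\rangle}{2}\bigr)^d$ applied to the augmented configuration $\{q_1,\dots,q_N,y_1,\dots,y_k\}$ (the paper via the matrix $H$, you via the linear functional $L$, and you note the equivalence yourself), and then minimize a strictly convex quadratic in $c_S$. Your write-up is in fact slightly more careful than the paper's on two points worth keeping: the $1/\sqrt{m}$ normalization that makes the expectation equal to a single-polynomial second moment, and the explicit justification that $V_S$ is positive definite (not merely invertible) so that the critical point is the unique minimizer.
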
 
\begin{proof} Let $H$ be the $m\times (N+k)$ matrix with columns indexed by $q_1,\dots, q_N, y_1,\dots, y_k$ with entries given by
\[H_{ij}= \begin{cases}
P_i(q_j)\text{, if $j\leq N$}\\
P_i(y_{j-N})\text{, if $j>N$.}
\end{cases}\]
Arguing as in the proof of Lemma~\ref{Basic} we conclude that the following equalities hold
\[\Psi(r,c)=\EE\left[ \left\|H \left(
\begin{array}{c}
c\\
-r\\
\end{array}
\right)\right\|^2\right]=c^tVc -2c^tAr+r^tDr\]
proving the first claim.
If $r$ is fixed and $c$ is a vector supported on the set $S$ then the quadratic form becomes
\[\psi(c)=c^tV_Sc -2c^tA^Sr+r^tDr\]
where $A^S$ is the restriction of $A$ to the rows corresponding to points $q_i$, $i\in S$.
If $V_S$ is invertible this function is strictly convex and thus its unique minimum is achieved when $\nabla \psi(c^*)=0$, proving the second claim. \end{proof}

If $V_S$ is invertible for all $S$ of size $k$ we can therefore define an optimal mean-squared error approximation to $\mu$ supported on $k$ points.

\begin{definition} An optimal mean-squared error approximation to $\mu$ supported on $k$ of the points $q_i$ is a signed measure $\nu^*$ supported on the $q_i$ whose vector of coefficients $c_{\nu^*}$ satisfies:
\begin{enumerate}
\item The support of $c_{\nu^*}$ has size at most $k$
\item $(c_{\nu^*})_S=c_S^*$ for some $S\subseteq [N]$ of size $k$.
\item $\Psi(c_{\nu^*},r)=\min_{S\subseteq [N], |S|=k} \Psi(c_S^*,r)=:(\tau^*)^2$
\end{enumerate}
We call the number $\tau^*$ the smallest mean squared approximation error. Note that this quantity depends only on the set of points $C$, the integer $d$ and the measure $\mu$.
\end{definition}
The following Theorem relates the optimal solution of our approximate recovery algorithm with the optimal mean-squared error approximation,
\begin{theorem} Let $\epsilon>0$ be a real number. Let $c^*$ be a minimizer of the problem
\[\min \|c\|_1\text{ subject to $\|\Phi c-b_\mu\|_2\leq (1+
\epsilon) \tau^*$.}\]
then $\|c^*-c_{\nu^*}\|_2\leq (1+\epsilon)B_1\tau^*$ for all sufficiently large $d$ with probability at least
\[ 1- \binom{N}{2k}\left(\frac{30}{\sqrt{2}-1}\right)^{2k}2e^{-mc_0\left(\frac{\sqrt{2}-1}{6}\right)} - 2e^{-mc_0(\epsilon)}\]
where $c_0(\eta):=\min\left(\frac{1}{2}\log\left(\frac{1}{1+\eta}\right)+\frac{\eta}{2}, \frac{1}{2}\log\left(\frac{1}{1-\eta}\right)-\frac{\eta}{2}\right).$
\end{theorem}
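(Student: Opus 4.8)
The plan is to deduce the theorem from Theorem~\hyperref[CompressedComputing]{A}, part (2), applied with measurement matrix $M=\Phi$, with noise level $\tau=(1+\epsilon)\tau^*$, and with $\nu$ taken to be the optimal mean-squared error approximation $\nu^*$, which is $k$-sparse by definition; the constant $B_1$ is the one supplied by Theorem~\ref{RIPrelevance} for matrices satisfying $\delta_{2k}<\sqrt2-1$. For this deduction to be legitimate we need two events to occur: the restricted isometry event $\{\delta_{2k}(\Phi)<\sqrt2-1\}$, and the event $\{\|b_\mu-b_{\nu^*}\|_2\le(1+\epsilon)\tau^*\}$, which says that $(1+\epsilon)\tau^*$ really is an admissible choice of noise level. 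The point of departure from Corollary~\ref{ExactRecovery} is that $b_\mu-b_{\nu^*}$ is built from the random polynomials $P_i$, so $\|b_\mu-b_{\nu^*}\|_2$ is a \emph{random} quantity whose root-mean-square is exactly $\tau^*$ (this is the defining property of $\nu^*$); the role of $\epsilon$ is precisely to absorb the overshoot of this random quantity above its typical scale.

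For the concentration of $\|b_\mu-b_{\nu^*}\|_2$ I would reuse the chi-squared argument from the proof of Lemma~\ref{Concentration}, but applied to the augmented matrix $H$ appearing in the proof of the Lemma above computing $\Psi(r,c)$. Writing $\mu=\sum r_i\delta_{y_i}$ and $c_{\nu^*}$ for the coefficient vector of $\nu^*$, one has $\|b_\mu-b_{\nu^*}\|_2=\tfrac{1}{\sqrt m}\bigl\|H(c_{\nu^*},-r)\bigr\|_2$, and the $m$ entries of this vector are i.i.d.\ centered Gaussians with common variance $\tfrac1m\Psi(r,c_{\nu^*})=\tfrac1m(\tau^*)^2$ (using that $\nu^*$ minimizes $\Psi$). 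Hence $Z:=m\|b_\mu-b_{\nu^*}\|_2^2/(\tau^*)^2$ is chi-squared with $m$ degrees of freedom, and the same moment-generating-function bound as in Lemma~\ref{Concentration}, with $\eta=\epsilon$, gives
\[\PP\left\{\,\left|\,\|b_\mu-b_{\nu^*}\|_2^2-(\tau^*)^2\,\right|\ge\epsilon(\tau^*)^2\,\right\}\le 2e^{-mc_0(\epsilon)}.\]
Since $(1+\epsilon)(\tau^*)^2\le\bigl((1+\epsilon)\tau^*\bigr)^2$, this yields $\|b_\mu-b_{\nu^*}\|_2\le(1+\epsilon)\tau^*$ outside an event of probability at most $2e^{-mc_0(\epsilon)}$. (If $\tau^*=0$ then $b_\mu=b_{\nu^*}$ almost surely and the statement reduces to exact recovery, so one may assume $\tau^*>0$ when invoking the lemma.)

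It remains to control $\delta_{2k}(\Phi)$ and to take care of well-definedness. By Theorem~\hyperref[main]{B} with $\delta=\sqrt2-1$ — equivalently, by Corollary~\ref{ExactRecovery} — there is a $d_0$ such that for all $d\ge d_0$ the event $\{\delta_{2k}(\Phi)<\sqrt2-1\}$ fails with probability at most $\binom{N}{2k}\bigl(\tfrac{30}{\sqrt2-1}\bigr)^{2k}2e^{-mc_0((\sqrt2-1)/6)}$; enlarging $d_0$ if necessary, Lemma~\ref{Gershgorin} guarantees that $V_S$ is invertible for every $S$ with $|S|=k$, so that $\tau^*$ and $\nu^*$ are defined. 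A union bound over the two failure events produces exactly the probability in the statement, and on the complementary event both $\delta_{2k}(\Phi)<\sqrt2-1$ and $\|b_\mu-b_{\nu^*}\|_2\le(1+\epsilon)\tau^*$ hold; writing $b_\mu=\Phi c_{\nu^*}+w$ with $\|w\|_2\le(1+\epsilon)\tau^*$ and $c_{\nu^*}$ $k$-sparse, Theorem~\hyperref[CompressedComputing]{A}, part (2), delivers $\|c^*-c_{\nu^*}\|_2\le B_1(1+\epsilon)\tau^*$. The only genuinely new ingredient is the concentration step of the second paragraph — in particular identifying the correct chi-squared variable and checking that its scale is exactly $(\tau^*)^2$ — and that is the step I expect to require the most care; everything else is assembling results already established and a union bound.
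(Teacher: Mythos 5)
Your proposal is correct and follows essentially the same route as the paper: condition on the two events $\{\delta_{2k}(\Phi)<\sqrt2-1\}$ and $\{\|b_\mu-b_{\nu^*}\|_2\le(1+\epsilon)\tau^*\}$, bound their failure probabilities by Corollary~\ref{ExactRecovery} and the chi-squared argument of Lemma~\ref{Concentration} respectively, and apply Theorem~\hyperref[CompressedComputing]{A}(2). The paper's proof is only a two-line sketch; your second paragraph correctly supplies the detail it glosses over, namely that the concentration bound must be applied to the augmented Gaussian vector $\tfrac{1}{\sqrt m}H(c_{\nu^*},-r)$ whose variance is $\tfrac1m(\tau^*)^2$.
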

\begin{proof} If $\delta_{2k}(\Phi)<\sqrt{2}-1$ and $b_{\nu^*}:=\Phi c_{\nu^*}$ satisfies $\|b_{c_{\nu^*}}-b_\mu\|\leq (1+\epsilon)\tau^*$ then the conclusion follows from Theorem~\hyperref[CompressedComputing]{A} part $(2)$. The probability that either of those fails is at most the sum of the probabilities computed in Corollary~\ref{ExactRecovery} and Lemma~\ref{Concentration} proving the Theorem.
\end{proof}

\section{Approximate recovery of probability measures.}
\label{ApproxRecov}
The value of the optimal mean-squared error approximation $\tau^*$ from the previous section seems difficult to compute and to interpret. In this section we show that, under the additional assumption that the points of our code $C=\{q_1,\dots, q_N\}$ are sufficiently close to those in the support of a probability measure $\mu:=\sum_{i=1}^k g_i \delta_{x_i}$, then our algorithm recovers an approximation of the measure $\mu_{C}$ which is supported on $C$ and which is closest to $\mu$ in the Wasserstein distance. 

More precisely we let $\mu_C$ be the probability measure that has the same coefficients as $\mu$ placed at the points of $C$ closest (in the usual metric $d(x,y)$ on the sphere) to the support of $\mu$. Relabeling the points of $C$ if necessary $\mu_C:=\sum_{i=1}^k g_i \delta_{q_i}$ where $q_i$ is any point of $C$ closest to $x_i$. Recall that the Wasserstein distance between probability measures $\nu_1,\nu_2$ supported on the sphere is given by 
\[W(\nu_1,\nu_2):=\sqrt{\inf_{\lambda}\int_{S\times S} d(X,Y)^2d\lambda}\]
where $(X,Y)$ is any vector with probability distribution $\lambda$ such that $X$ and $Y$ have marginal distributions given by $\mu$ and $\nu$ respectively. It is well known (see for instance~\cite[page 33]{FQPD}) that $W(\mu,\mu_C)\leq W(\mu,\tau)$ for any other probability measure $\tau$ supported on $C$. It is a problem of much interest to be able to find such optimal approximations $\mu_C$.

In this section we show that our approximate recovery algorithm can be used for finding an approximation of $\mu_C$ via convex programming whenever the support of $\mu$ is sufficiently close to $C$. For a positive integer $m$ let $P_1,\dots, P_m$ be independent Kostlan-Shub-Smale polynomials. For a measure $\mu$ let $b_{\mu}$ be the (random) vector of moments of $\mu$ with respect to the functions $\frac{1}{\sqrt{m}} P_i$. Our main result is Theorem~\hyperref[main:Approx]{C} which gives an estimate for the norm $\|c^*-g\|_2$ which holds with overwhelming probability as the number of measurements increases. 

The key result is the following Lemma which estimates the average mean squared error between the random vectors $b_{\mu}$ and $b_{\mu_C}$ in terms of geometric quantities. To this end let $\theta=\max_j\min_i\arccos \langle x_j,q_i\rangle$ (due to our notational conventions $\theta=\max_i \arccos \langle x_i,q_i\rangle$).

\begin{lemma}\label{lemma:MSE}  The following inequality holds
\[\EE[\|b_{\mu}-b_{\mu_C}\|_2^2]\leq \|g\|_2^2 2k\left(1-\left(\frac{1+\cos(\theta)}{2}\right)^d\right).\] 
In particular if $\mu$ is a probability measure then
\[\EE[\|b_{\mu}-b_{\mu_C}\|_2^2]\leq 2k\left(1-\left(\frac{1+\cos(\theta)}{2}\right)^d\right).\] 
and if $\mu$ is a uniform probability measure then
\[\EE[\|b_{\mu}-b_{\mu_C}\|_2^2]\leq 2\left(1-\left(\frac{1+\cos(\theta)}{2}\right)^d\right).\] 
\end{lemma}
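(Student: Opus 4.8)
The plan is to compute $\EE[\|b_\mu - b_{\mu_C}\|_2^2]$ directly using the quadratic form from the previous lemma, specialized to the case where $\nu = \mu_C$ has the same weights $g$ as $\mu$ but placed at the nearby code points $q_i$. In the notation of that lemma (with $r = g$ and the support of $\nu$ being exactly the $k$ points $q_1,\dots,q_k$ closest to $x_1,\dots,x_k$), we get
\[
\EE[\|b_\mu - b_{\mu_C}\|_2^2] = g^t V_k g - 2 g^t A_k g + g^t D g,
\]
where $(V_k)_{st} = \left(\frac{1+\langle q_s,q_t\rangle}{2}\right)^d$, $(D)_{st} = \left(\frac{1+\langle x_s,x_t\rangle}{2}\right)^d$, and $(A_k)_{st} = \left(\frac{1+\langle x_s,q_t\rangle}{2}\right)^d$ is the $k\times k$ mixed block. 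So the whole expression is $\sum_{s,t} g_s g_t \left[\left(\tfrac{1+\langle q_s,q_t\rangle}{2}\right)^d - 2\left(\tfrac{1+\langle x_s,q_t\rangle}{2}\right)^d + \left(\tfrac{1+\langle x_s,x_t\rangle}{2}\right)^d\right]$.

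Next I would bound this bilinear expression crudely. Each of the three Gram-type matrices has diagonal entries equal to $1$ and off-diagonal entries in $[-1,1]$ (in fact in $[0,1]$ after the $\frac{1+\langle\cdot,\cdot\rangle}{2}$ transformation, which lands in $[0,1]$). Rather than trying to exploit cancellation among off-diagonal terms, I would use the triangle-inequality-style bound
\[
\EE[\|b_\mu-b_{\mu_C}\|_2^2] = \EE\Big[\Big\|\sum_{i=1}^k g_i\big(\tfrac{1}{\sqrt m}P(q_i) - \tfrac{1}{\sqrt m}P(x_i)\big)\Big\|_2^2\Big] \le \Big(\sum_{i=1}^k |g_i|\,\EE[\|\tfrac{1}{\sqrt m}(P(q_i)-P(x_i))\|_2^2]^{1/2}\Big)^2,
\]
using Minkowski's inequality in $L^2(\Omega)$; then by Cauchy–Schwarz this is at most $\|g\|_2^2 \cdot \sum_{i=1}^k \EE[\|\tfrac{1}{\sqrt m}(P(q_i)-P(x_i))\|_2^2]$. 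For a single index $i$, Lemma~\ref{Basic} gives $\EE[\|\tfrac{1}{\sqrt m}(P(q_i)-P(x_i))\|_2^2] = V_{q_iq_i} - 2\left(\tfrac{1+\langle x_i,q_i\rangle}{2}\right)^d + V_{x_ix_i} = 2 - 2\left(\tfrac{1+\cos(\arccos\langle x_i,q_i\rangle)}{2}\right)^d = 2\left(1 - \left(\tfrac{1+\cos(\theta_i)}{2}\right)^d\right)$ where $\theta_i = \arccos\langle x_i,q_i\rangle \le \theta$. Since $t \mapsto \left(\tfrac{1+\cos t}{2}\right)^d$ is decreasing on $[0,\pi]$, each term is at most $2\left(1 - \left(\tfrac{1+\cos\theta}{2}\right)^d\right)$, and summing over the $k$ points yields the claimed bound $\|g\|_2^2\, 2k\left(1 - \left(\tfrac{1+\cos\theta}{2}\right)^d\right)$. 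The probability-measure and uniform-measure cases follow by substituting $\|g\|_2^2 = \sum g_i^2 \le (\sum g_i)^2 = 1$ and $\|g\|_2^2 = \sum (1/k)^2 = 1/k$ respectively.

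The only subtle point — and the place to be careful rather than a genuine obstacle — is justifying the Minkowski step: I need the random vector identity $b_\mu - b_{\mu_C} = \sum_i g_i \big(\tfrac{1}{\sqrt m}(P_\cdot(q_i)) - \tfrac{1}{\sqrt m}(P_\cdot(x_i))\big)$ as an identity of $\RR^m$-valued random variables (componentwise each $P_j$ contributes $P_j(q_i) - P_j(x_i)$), and then apply the triangle inequality for the $L^2(\Omega;\RR^m)$ norm $X \mapsto \EE[\|X\|_2^2]^{1/2}$. This is standard. Alternatively, if one prefers to avoid Minkowski, one can argue directly from the quadratic-form expression: writing it as $\sum_{s,t} g_s g_t B_{st}$ with $B := V_k - 2A_k + D$, one checks $B$ is positive semidefinite (it equals $\EE[\zeta\zeta^t]$ for the random vector $\zeta_i = \tfrac{1}{\sqrt m}(P(q_i) - P(x_i))$ up to stacking, hence PSD), so $\sum_{s,t} g_sg_t B_{st} \le \|g\|_2^2 \,\mathrm{tr}(B) = \|g\|_2^2 \sum_i B_{ii}$, and $B_{ii} = 2\left(1 - \left(\tfrac{1+\cos\theta_i}{2}\right)^d\right) \le 2\left(1-\left(\tfrac{1+\cos\theta}{2}\right)^d\right)$ as above. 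Either route is routine; the content is entirely in the variance computation of Lemma~\ref{Basic} together with the monotonicity of $\left(\tfrac{1+\cos t}{2}\right)^d$.
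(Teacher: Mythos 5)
Your proof is correct and follows essentially the same route as the paper: decompose $b_\mu-b_{\mu_C}$ atom by atom, use Cauchy--Schwarz to extract $\|g\|_2^2$, and reduce to the single-point variance $\EE[(P(x_i)-P(q_i))^2]=2-2\bigl(\tfrac{1+\langle x_i,q_i\rangle}{2}\bigr)^d$ from Lemma~\ref{Basic} together with monotonicity in the angle. The only (cosmetic) difference is that the paper applies Cauchy--Schwarz pointwise in $\omega$ before taking expectations, whereas you apply Minkowski in $L^2(\Omega;\RR^m)$ first; both yield the identical bound.
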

\begin{proof} By definition we know that

\[\EE[\|b_{\mu}-b_{\mu_C}\|_2^2]=\EE\left(\sum_{j=1}^m\frac{1}{m} \left(\sum_{i=1}^kg_i(P_j(x_i)-P_j(q_i))\right)^2\right)\]
and using the Cauchy-Schwarz inequality we conclude that the following inequality holds
\[\left(\sum_{i=1}^k g_i(P_j(x_i)-P_j(q_i))\right)^2\leq \|g\|_2^2 \|\left(P_j(x_i)-P_j(q_i)\right)_{1\leq i\leq k}\|_2^2 \]
for $j=1,\dots, m$. As a result
\[\EE\left(\sum_{i=1}^k g_i(P_j(x_i)-P_j(q_i))\right)^2\leq \|g\|_2^2 \sum_{i=1}^k\EE\left[(P_j(x_i)-P_j(q_i))^2\right]\] 
By Lemma~\ref{Basic} the expected value in the right hand side can be estimated, for all $i$ and $j$ as
\[\EE\left[(P_j(x_i)-P_j(q_i))^2\right]=2-2\left(\frac{1+\langle x_i,q_i\rangle}{2}\right)^d\leq 2\left(1-\left(\frac{1+\cos(\theta)}{2}\right)^d\right)\]
proving the first claim. For the last two claims recall that $\|c\|_2\leq \|c\|_1$ and that for a uniform probability measure $\|c\|_2=\frac{1}{\sqrt{k}}$
\end{proof}

Combining our previous results we will now prove the main result of this section,

\begin{theorem*}[{\bf C}] 

Let $g\in \RR^n$ be the vector of coefficients of the measure $\mu_C$ and let $\epsilon>0$. 
If $c^*$ is a minimizer of the problem
\[\min \|c\|_1\text{ subject to $\|\Phi c-b_\mu\|_2\leq (1+
\epsilon)  \|g\|_2 \sqrt{2k\left(1-\left(\frac{1+\cos(\theta)}{2}\right)^d\right)}$.}\]
then
\[ \|g-c^*\|_2\leq B_1(1+\epsilon)\|g\|_2\sqrt{2k\left(1-\left(\frac{1+\cos(\theta)}{2}\right)^d\right)}\]
for all sufficiently large $d$ with probability at least
\[ 1- \binom{N}{2k}\left(\frac{30}{\sqrt{2}-1}\right)^{2k}2e^{-mc_0\left(\frac{\sqrt{2}-1}{6}\right)} - 2e^{-mc_0(\epsilon)}\]
where $c_0(\eta):=\min\left(\frac{1}{2}\log\left(\frac{1}{1+\eta}\right)+\frac{\eta}{2}, \frac{1}{2}\log\left(\frac{1}{1-\eta}\right)-\frac{\eta}{2}\right).$
\end{theorem*}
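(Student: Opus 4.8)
The plan is to combine three facts already in hand — Theorem~\hyperref[CompressedComputing]{A}(2) (approximate $\ell_1$-recovery under the restricted isometry property), Corollary~\ref{ExactRecovery} (the event $\delta_{2k}(\Phi)<\sqrt2-1$ has high probability once $d$ is large), and Lemma~\ref{lemma:MSE} (the mean-squared error $\EE[\|b_\mu-b_{\mu_C}\|_2^2]$ is at most $R^2$) — with one extra observation, namely that $\|b_\mu-b_{\mu_C}\|_2^2$ concentrates around its mean exactly in the manner of Lemma~\ref{Concentration}. Throughout I abbreviate $R:=\|g\|_2\sqrt{2k\left(1-\left(\tfrac{1+\cos(\theta)}{2}\right)^{d}\right)}$, the quantity appearing in the tolerance of the program.

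First I would introduce the two favorable events $E_1:=\{\delta_{2k}(\Phi)<\sqrt2-1\}$ and $E_2:=\{\|b_\mu-b_{\mu_C}\|_2\le(1+\epsilon)R\}$, and check that the conclusion holds deterministically on $E_1\cap E_2$. The measure $\mu_C=\sum_{i=1}^k g_i\delta_{q_i}$ is supported on $C$, its coefficient vector $g\in\RR^N$ is $k$-sparse, and $\Phi g=b_{\mu_C}$ because $b_{\mu_C}$ is by definition the moment vector of $\mu_C$ with respect to the functions $P_i/\sqrt m$. Hence on $E_2$ the vector $g$ is feasible for the second-order cone program in the statement, and on $E_1\cap E_2$ Theorem~\hyperref[CompressedComputing]{A}(2), applied with $\nu=\mu_C$, $c_\nu=g$ and $\tau=(1+\epsilon)R$, gives $\|g-c^*\|_2\le B_1(1+\epsilon)R$, which is precisely the asserted inequality.

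It then remains to bound $\PP(E_1^c\cup E_2^c)$ by a union bound. For $E_1$, Corollary~\ref{ExactRecovery} (equivalently Theorem~\hyperref[main]{B} with $\delta=\sqrt2-1$) gives, for all sufficiently large $d$, $\PP(E_1^c)\le\binom{N}{2k}\left(\tfrac{30}{\sqrt2-1}\right)^{2k}2e^{-mc_0\left(\frac{\sqrt2-1}{6}\right)}$. For $E_2$ I would note that $b_\mu-b_{\mu_C}=\Phi'\binom{g}{-g}$, where $\Phi'$ is the normalized measurement matrix (in the sense of the paper) built from the point configuration $\{x_1,\dots,x_k,q_1,\dots,q_k\}$ and the same polynomials $P_1,\dots,P_m$; indeed both sides have $j$-th entry $\tfrac1{\sqrt m}\sum_{i}g_i\bigl(P_j(x_i)-P_j(q_i)\bigr)$. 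Writing $\sigma^2:=\EE[\|b_\mu-b_{\mu_C}\|_2^2]$, Lemma~\ref{Basic} identifies $\sigma^2$ with $\binom{g}{-g}^tV'\binom{g}{-g}$ for the corresponding covariance matrix $V'$, and Lemma~\ref{lemma:MSE} gives $\sigma^2\le R^2$. Applying Lemma~\ref{Concentration} to $\Phi'$ and the vector $\binom{g}{-g}$ with $\eta=\epsilon$ yields $\PP\{\,|\,\|b_\mu-b_{\mu_C}\|_2^2-\sigma^2\,|\ge\epsilon\sigma^2\}\le 2e^{-mc_0(\epsilon)}$. Since $\sigma^2\le R^2$ and $(1+\epsilon)^2\ge 1+\epsilon$, the complementary event $E_2^c=\{\|b_\mu-b_{\mu_C}\|_2^2>(1+\epsilon)^2R^2\}$ is contained in $\{\|b_\mu-b_{\mu_C}\|_2^2>(1+\epsilon)\sigma^2\}$, so $\PP(E_2^c)\le 2e^{-mc_0(\epsilon)}$. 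Adding the two bounds gives the failure probability in the statement.

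The main obstacle — essentially the only nontrivial point — is the concentration step for $E_2$: one must recognize that $\|b_\mu-b_{\mu_C}\|_2^2$ is, up to scaling, a chi-squared variable with $m$ degrees of freedom (this is exactly where the independence of the sampled polynomials and the Gaussianity of each $P_j$ enter, as in Lemma~\ref{Basic}), and then convert the \emph{inequality} $\EE[\|b_\mu-b_{\mu_C}\|_2^2]\le R^2$ of Lemma~\ref{lemma:MSE} into a one-sided tail bound at the level $(1+\epsilon)R$ rather than at the level of the (unknown, possibly much smaller) true mean. Everything else is a direct substitution into Theorem~\hyperref[CompressedComputing]{A} together with a union bound, and the hypothesis ``for all sufficiently large $d$'' is inherited solely from Corollary~\ref{ExactRecovery} (Lemma~\ref{lemma:MSE} and the concentration bound hold for every $d$ and $m$).
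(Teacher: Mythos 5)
Your proof is correct and follows essentially the same route as the paper: condition on the two events $\delta_{2k}(\Phi)<\sqrt{2}-1$ and $\|b_\mu-b_{\mu_C}\|_2\le(1+\epsilon)\|g\|_2\sqrt{2k\left(1-\left(\frac{1+\cos(\theta)}{2}\right)^d\right)}$, apply Theorem A part (2) with $\nu=\mu_C$ and $c_\nu=g$, and bound the failure probability by a union bound via Corollary~\ref{ExactRecovery} and Lemma~\ref{Concentration}. Your write-up is in fact more explicit than the paper's at the one delicate point, namely passing from the chi-squared concentration around the true mean $\sigma^2=\EE[\|b_\mu-b_{\mu_C}\|_2^2]$ to the stated tolerance level using the bound $\sigma^2\le\|g\|_2^2\,2k\left(1-\left(\frac{1+\cos(\theta)}{2}\right)^d\right)$ from Lemma~\ref{lemma:MSE}.
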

\begin{proof} If $\delta_{2k}(\Phi)<\sqrt{2}-1$ and $\|b_{\mu_{C}}-b_\mu\|\leq (1+\epsilon)  \|g\|_2\sqrt{2k\left(1-\left(\frac{1+\cos(\theta)}{2}\right)^d\right)}$ then the conclusion follows from Theorem~\hyperref[CompressedComputing]{A} part $(2)$. The probability that either of those fails is at most the sum of the probabilities computed in Corollary~\ref{ExactRecovery} and Lemma~\ref{Concentration} proving the Theorem.
\end{proof} 

Finally, we show that successive application of our approximation algorithm using an increasing sequence $C_{j}$ of codes with dense union leads to a consistent estimation of certain classes of probability measures $\mu$. More precisely we use our algorithm to construct a sequence of measures $\mu_{C_{j}}^*$ supported on $C_{j}$ which converges to $\mu$ in the Wasserstein metric whenever $\mu$ is easily approximable (in a sense to be defined) by the sequence of codes $\left(C_{j}\right)_j$. 

Recall that $\mu:=\sum_{i=1}^k g_i \delta_{x_i}$ and assume $(C_j)_{j\in\mathbb{N}}$ is an increasing sequence of finite subsets of the sphere whose union $\bigcup C_{j}$ is dense in $S$.  Suppose that $C_{j}$ consistis of $N_j$ points labeled $\{q_1,\dots, q_{N_j}\}$. Define $\alpha_j:=\max_{x\neq x'\in C_j} \langle x,x'\rangle$ and $\theta_j:=\max_{i=1,\dots,k}\min_{q\in C_{j}} \arccos\langle q,x_i\rangle$. Note that $\alpha_j<1$ and that by density $\lim_{j\rightarrow \infty}\alpha_j = 1$. 

\begin{definition} 
\label{EasyApprox}
We say that the measure $\mu$ is easily approximable by the sequence of codes $\left(C_{j}\right)_{j\in \mathbb{N}}$ if there exist a sequence of integers $d_j$ such that:

\begin{enumerate}
\item For all sufficiently large $j$ the inequality $(k-1)\left(\frac{1+\alpha_j}{2}\right)^{d_j}<\sqrt{2}-1$ holds.
\item The equality $\lim_{j\rightarrow \infty} \left(\frac{1+\cos(\theta_j)}{2}\right)^{d_j}=1$ holds. 
\end{enumerate}
\end{definition}

\begin{remark} The numbers $\alpha_j$ and $\cos(\theta_j)$ measure the distance between distinct points in the code $C_j$ and the distance between the points of the support of $\mu$ and the code $C_j$. Intuitively, a measure $\mu$ is easily approximable by the codes $C_j$ if, as $j\rightarrow \infty$ the points of the code approach the support of $\mu$ faster than they approach each other.
\end{remark}

For each $j$ let $M^{(j)}$ be the measurement matrix defined by an independent sample of size $m_j$ of Kostlan-Shub-Smale polynomials of degree $d_j$ and let $b_{\mu}^{(j)}$ be the corresponding vector of moments. For a real number $\tau_j$ define $c^*_j$ to be an optimum of the optimization problem
\[\min \|c\|_1 \text{ s.t. $\|M^{(j)}c-b_{\mu}^{(j)}\|\leq \tau_j$}.\]

We do not know a way to guarantee that the optima $c_j^*$ in the previous problem are vectors with nonnegative entries (although our computational experiments suggest that this is generally the case, up to numerical noise). We therefore construct a probability measure out of the vector $c_j^*$ and a numerical threshold parameter $t>0$ as follows: let $(h_j^*)_i:=(c_j^*)_i$ if $(c_j^*)_i>t$ and $(h_j^*)_i:=0$ otherwise and define $\mu_{C_j}^*:=\frac{\sum_{i=1}^N (h_j^*)_i\delta_{q_i}}{\|h_j^*\|_1}$. 

\begin{theorem}\label{thm: consistency}  If $\mu$ is easily approximable by $(C_j)_{j\in \mathbb{N}}$ and $t:=\frac{1}{2}\min_{i=1,\dots,k} (g_i)$ then there exist sequences $m_j$ and $\tau_j$ such that the sequence of probability measures $\mu_{C_j}^*$ converges to $\mu$ in the Wasserstein metric almost surely.
\end{theorem}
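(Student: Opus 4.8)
The plan is to apply Theorem~\hyperref[main:Approx]{C} along the sequence $C_j$ with the degrees $d_j$ furnished by Definition~\ref{EasyApprox}, choose $m_j$ large enough that the failure probabilities are summable, invoke Borel--Cantelli to obtain almost-sure eventual recovery, and then convert the resulting $\ell_2$-closeness of coefficient vectors into Wasserstein convergence, accounting for the thresholding and renormalization used to build $\mu_{C_j}^*$. Concretely, I would fix $\epsilon>0$ once and for all (say $\epsilon=1$) and set
\[ \tau_j:=(1+\epsilon)\|g\|_2\sqrt{2k\left(1-\left(\tfrac{1+\cos(\theta_j)}{2}\right)^{d_j}\right)}, \]
which tends to $0$ by condition (2) of Definition~\ref{EasyApprox}. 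Since $\alpha_j\to1$, condition (1) forces $d_j\to\infty$ at a rate that, through Lemma~\ref{Gershgorin}, keeps the spectra of the $2k\times 2k$ principal submatrices of the variance matrix $V^{(j)}$ of $C_j$ (Lemma~\ref{Basic}) inside the window required in the proof of Theorem~\hyperref[main]{B} with $\delta=\sqrt2-1$; hence for all large $j$ the degree $d_j$ is ``sufficiently large'' in the sense of Theorem~\hyperref[main:Approx]{C}. With $\tau_j$ as above, $c_j^*$ is exactly the minimizer appearing in that theorem for the code $C_j$, so on the event $G_j$ that $\delta_{2k}(M^{(j)})<\sqrt2-1$ and $\|b_{\mu_{C_j}}^{(j)}-b_\mu^{(j)}\|\le\tau_j$ one has $\|g-c_j^*\|_2\le B_1\tau_j$.

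Next I would make recovery succeed almost surely for all large $j$. By Corollary~\ref{ExactRecovery} and Lemma~\ref{Concentration} (this is the probability bound inside Theorem~\hyperref[main:Approx]{C}),
\[ \PP(\Omega\setminus G_j)\le\binom{N_j}{2k}\left(\tfrac{30}{\sqrt2-1}\right)^{2k}2e^{-m_jc_0\left(\frac{\sqrt2-1}{6}\right)}+2e^{-m_jc_0(\epsilon)}=:p_j. \]
Using $\binom{N_j}{2k}\le N_j^{2k}$, one can take $m_j$ growing like $\frac{2k\log N_j+2\log j}{\min\left(c_0((\sqrt2-1)/6),\,c_0(\epsilon)\right)}$ (plus a fixed constant) so that $p_j=O(j^{-2})$ and $\sum_j p_j<\infty$. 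Drawing the Kostlan--Shub--Smale samples for distinct $j$ independently, the Borel--Cantelli lemma gives that almost surely only finitely many events $\Omega\setminus G_j$ occur; on this almost-sure event $\|g-c_j^*\|_2\le B_1\tau_j$ for all large $j$, with $B_1\tau_j\to0$.

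It remains to pass from $\|g-c_j^*\|_2\to0$ to $W(\mu_{C_j}^*,\mu)\to0$, working on the almost-sure event above. For $j$ large enough that $B_1\tau_j<t=\tfrac12\min_ig_i$, and that the nearest code points $q_{j(1)},\dots,q_{j(k)}$ to $x_1,\dots,x_k$ are distinct (which holds eventually by density of $\bigcup C_j$), each support coordinate satisfies $(c_j^*)_{j(i)}\ge g_i-B_1\tau_j>g_i-t\ge t$ and so survives thresholding, while every other coordinate $\ell$ has target value $0$ and $|(c_j^*)_\ell|\le B_1\tau_j<t$, hence is set to $0$; thus $h_j^*$ is supported exactly on $\{q_{j(i)}\}_{i=1}^k$ with $\|h_j^*-g\|_2\le B_1\tau_j$, so $|\,\|h_j^*\|_1-1\,|\le\sqrt k\,B_1\tau_j\to0$ (as $\|g\|_1=1$) and the coefficient vector $h_j^*/\|h_j^*\|_1$ of $\mu_{C_j}^*$ converges to $g$ in $\ell_2$. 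Finally split $W(\mu_{C_j}^*,\mu)\le W(\mu_{C_j}^*,\mu_{C_j})+W(\mu_{C_j},\mu)$: the coupling pairing $x_i$ with $q_{j(i)}$ with mass $g_i$ gives $W(\mu_{C_j},\mu)\le\sqrt{\sum_ig_i\,d(x_i,q_{j(i)})^2}\le\theta_j\to0$ (condition (2) forces $\tfrac{1+\cos\theta_j}{2}\to1$, hence $\theta_j\to0$), while $\mu_{C_j}^*$ and $\mu_{C_j}$ are supported on the same $k$ points and differ in total variation by $\tfrac12\|\,h_j^*/\|h_j^*\|_1-g\,\|_1\to0$, so moving that amount of mass a distance at most $\diam(S)=\pi$ gives $W(\mu_{C_j}^*,\mu_{C_j})\to0$. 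Combining the three limits yields $W(\mu_{C_j}^*,\mu)\to0$ almost surely.

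The probabilistic bookkeeping (choosing $m_j$ and applying Borel--Cantelli) is routine; the main obstacle is the last step, where one must show that the single fixed threshold $t=\tfrac12\min_ig_i$ is at once small enough to keep every genuine atom and large enough to annihilate all spurious ones as soon as $B_1\tau_j<t$, and that the residual error in the surviving coefficients, once renormalized to a probability measure, is absorbed into an arbitrarily small Wasserstein perturbation. A secondary point of care is verifying that the degrees $d_j$ of Definition~\ref{EasyApprox} simultaneously clear the restricted-isometry threshold needed in Theorem~\hyperref[main]{B} (condition (1)) and drive $\tau_j$ to $0$ (condition (2)).
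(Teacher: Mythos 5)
Your proof is correct and follows essentially the same route as the paper's own argument: the same choice of $\tau_j$, Borel--Cantelli applied to summably chosen $m_j$, the thresholding argument showing $t=\tfrac12\min_i g_i$ retains exactly the true atoms once $B_1\tau_j<t$, and the triangle-inequality split $W(\mu,\mu_{C_j}^*)\le W(\mu,\mu_{C_j})+W(\mu_{C_j},\mu_{C_j}^*)$ with the diameter bound. (One small nitpick: choose $\epsilon\in(0,1)$ rather than $\epsilon=1$, since $c_0(\eta)$ in Lemma~\ref{Concentration} is stated for $0<\eta<1$; this changes nothing in the argument.)
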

\begin{proof} Since $\mu$ is easily approximable by the $C_j$ there exists a sequence $d_j$ which satisfies the two items in Definition~\ref{EasyApprox}.
For $\epsilon>0$ choose a sequence of integers $m_j$ which are sufficiently large so that the probability of failure in the inequality of Theorem $C$ is bounded above by the quantity

\[ \binom{N_j}{2k}\left(\frac{30}{\sqrt{2}-1}\right)^{2k}2e^{-m_jc_0\left(\frac{\sqrt{2}-1}{6}\right)}+2e^{-m_jc_0(\epsilon)}\]
and so that the sum over all integers $j$ of this quantity converges.
Define $\tau_j$ by the formula
\[\tau_j:=(1+
\epsilon)  \|g\|_2 \sqrt{2k\left(1-\left(\frac{1+\cos(\theta_j)}{2}\right)^{d_j}\right)}\]
and note that $\lim_{j\rightarrow \infty}\tau_j = 0$ since we are assuming that $\mu$ is a measure which is easily approximable by $(C_j)_j$.

By the Borel-Cantelli Lemma we conclude that with probability one the coefficients of the resulting sequence of optima $c_j^*$ satisfy $\|c_j^*-g\|_2\leq B_1\tau_j$ for all sufficiently large $j$. It follows that there exists an integer $j_0$ such that for all $j\geq j_0$ the inequality $\|c_j^*-g\|_2<t$ holds. For all such $j$ the supports of $h_j^*$ has cardinality $k$ because $\|g-c_j^*\|_{\infty}\leq \|g-c_j^*\|_2<t$ so the cutoff procedure keeps exactly those coefficients $c_i$ for which $g_i\neq 0$. Next, note that the following inequalities hold for $j\geq j_0$,
\[|\|h_j^*\|_1-\|g\|_1|\leq \|g-h_j^*\|_1\leq \sqrt{k}\|g-h_j^*\|_2\leq\sqrt{k} \|g-c_j^*\|\leq \sqrt{k}B_1\tau_j\]
and as a result $\lim_{j\rightarrow\infty} \|h_j^*\|_1=1$ and $\lim_{j\rightarrow \infty} \left\| h_j^*-\frac{h_j^*}{\|h_j^*\|_1}\right\| =0$. 
Moreover the inequality 
\[\left\| g-\frac{h_j^*}{\|h_j^*\|_1}\right\|_1\leq \|g-h_j^*\|_1+\left\| h_j^*-\frac{h_j^*}{\|h_j^*\|_1}\right\|_1 \]
holds for all $j\geq j_0$ so $\lim_{j\rightarrow \infty}\left\| g-\frac{h_j^*}{\|h_j^*\|_1}\right\|_1=0$.

Now let $\mu_{C_j}$ be the best approximation to $\mu$ in the Wasserstein distance among measures supported in $C_j$ (i.e. $\mu_{C_j}:=\sum_{i=1}^k g_i\delta_{q_i^j}$ where $q_i^j$ is a point of $C_j$ closest to $x_i$) and note that the inequality 

\[W(\mu_{C_j},\mu_{C_j}^*)\leq \pi \left\|g-\frac{h_j^*}{\|h_j^*\|_1}\right\|_1\]
holds because masses supported on different points on the sphere need to be transported at most the diameter $\pi$ of the unit sphere. We conclude that $W(\mu_{C_j},\mu_{C_j}^*)\rightarrow 0$ as $j\rightarrow \infty$. Since the Wasserstein metric satisfies the triangle inequality we have
\[W(\mu,\mu_{C_j}^*)\leq W(\mu,\mu_{C_j})+W(\mu_{C_j},\mu_{C_j}^*).\]
The first term in the right-hand side goes to zero by density of the set $\bigcup_j C_j$ in the sphere and we have proven that the second term goes to zero verifying the claim.
\end{proof}

\section{Some numerical experiments}
\label{Sec: numericAlgos}

To illustrate our main Theorems and to explore the numerical behavior of the proposed algorithms we carried out some computer experiments on point measures on the unit spheres in $\RR^2$ and $\RR^8$. We fix codes $C$ in $S^1$ and $S^7$. For the circle we let $C$ be a set of $200$ equally-spaced points and for $S^7$ we let $C$ be the $240$ vectors of the root system $E_8$ (see~\cite[Chapter 3]{Humphreys} for an introduction to root systems and~\cite{Wiki} for a purely combinatorial description of this remarkable code). We carry out the following two numerical experiments:

\begin{enumerate}
\item Our first experiment illustrates the exact recovery algorithm. We consider point measures $\mu=\sum c_i\delta_{q_i}$ supported on $k$ points of $C$ with $k=1,\dots, 50$. We try to recover $\mu$ via the exact recovery procedure proposed in Theorem~\hyperref[CompressedComputing]{A} using $m$ moments with respect to a random sample of Kostlan-Shub-Smale polynomials of degrees $d=30,5$ respectively. We denote our optimal solution by $c^*$ and report the error $\|c-c^*\|_2$.  Figure~\ref{Fig: exactRecovery} contains the results of these experiments for point measures in the one-dimensional and seven-dimensional spheres respectively. As expected from our estimates of the RIP constants (see Theorem~\hyperref[main]{B}) the recovery is exact for several sparsities and the range increases considerably as the number $m$ of measurements increases.

\item Our second experiment illustrates the approximate recovery algorithm. We consider point measures $\mu=\sum_{i=1}^k c_i\delta_{p_i}$ supported on $k$ points $p_i$ which do not lie in $C$.  We use the approximate recovery procedure proposed in Theorem~\hyperref[CompressedComputing]{A} given $m$ moments of $\mu$ with respect to a random sample of Kostlan-Shub-Smale polynomials of degrees $d=30,5$ respectively. To do this, we solve the optimization problem
\[\min \|c\|_1\text{ subject to $\|Mc-b_\mu\|_2\leq 
\tau$.}\]
for a small value of $\tau$. In order to determine the value of $\tau$ we recommend the following procedure:
\begin{enumerate}
\item Choose a small value of $\epsilon$ ($\epsilon=0.1$ in our experiments) and solve the optimization problem above for several $\tau\in [0,\epsilon)$. For each optimal solution $c^*(\tau)$ let $k(\tau)$ be the ``numerical" sparsity (i.e. the number of coefficients with absolute value above a certain numerical error threshold). Experiments show that the numerical sparsity tends to stabilize around some value $k^*$ for small $\tau$. Figure~\ref{Fig: approxRecoverySupport}  shows the function $k(\tau)$ for measures supported in $k=3,30$ points in $S^1$ and $S^7$ respectively when the support of $\mu$ lies at a distance (angle) of $\frac{2\pi}{400\times 20}$ and $\frac{2\pi}{ 240\times 2}$ from the code $C$ and the number $m$ of measurements is $m=120,144$ respectively.
\item Solve the optimization problem with $\tau=\tau^*$ where $\tau^*$ is the smallest value  of $\tau$ for which the solution achieves the stabilized sparsity $k^*$.
\end{enumerate}
In Figure~\ref{Fig: approxRecoveryError} we compare the measure $\mu_C=\sum c_i\delta_{q_i}$ defined as the best approximation to $\mu$ supported on $C$ (as in Section~\ref{ApproxRecov}) and the measure $\mu^*=\sum c_i^*\delta_{q_i}$ obtained from solving the convex problem above with $\tau=\tau^*$. The error reported is $\|c-c^*\|_2$ and we let $\mu$ range over a set of measures supported on sets of size $k=3,30$ resp. whose minimum distance with $C$ is at least a given value $\theta$ which we vary between $0\leq \theta\leq \frac{\pi}{200}$ and $0\leq \theta\leq \frac{\pi}{240}$ respectively. As expected from Theorem~\hyperref[main:Approx]{C}, the accuracy of the recovery improves when the support of the unknown measure is closer to the points of $C$. 
\end{enumerate}

All algorithms were implemented in the Julia programming language~\cite{Julia} using the JuMP~\cite{JuMP} modeling language. The resulting second-order cone programs were solved with the Mosek large scale optimization solver on a personal computer. The Julia code for the computational experiments of this section is available for download at \url{https://github.com/hernan1992garcia/measure-recovery}.

\begin{figure}
\hspace*{-1.75cm}
\caption{Error in exact recovery algorithm in the spheres $S^1$ and $S^7$}
\label{Fig: exactRecovery}
\includegraphics[height=6cm]{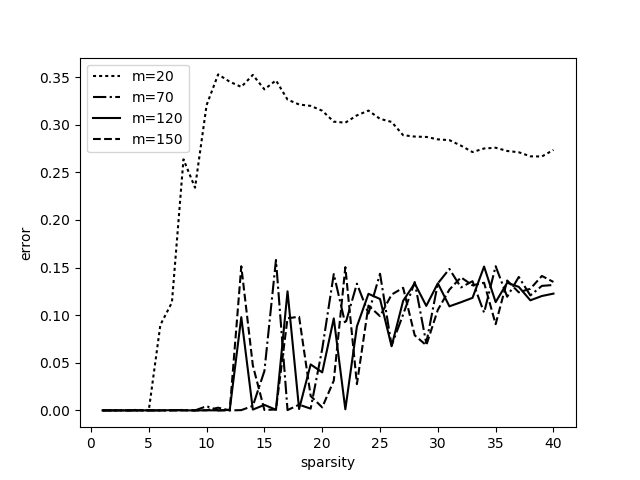}\includegraphics[height=6cm]{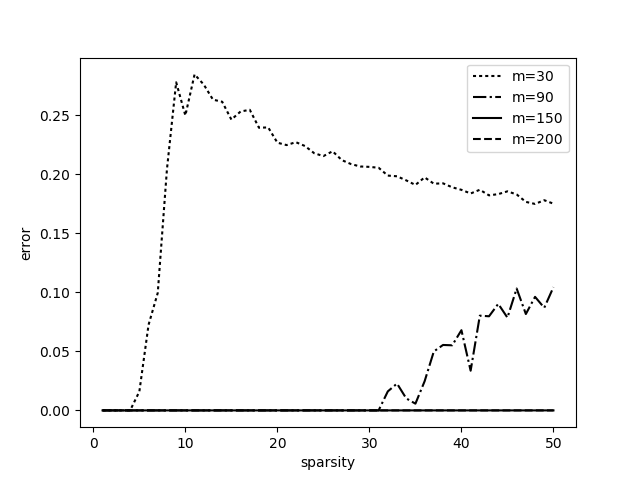}

\end{figure}

\begin{figure}
\hspace*{-1.75cm}
\caption{Sparsity of optimal solutions of the approximate recovery algorithm in spheres $S^1$ and $S^7$}
\label{Fig: approxRecoverySupport}
\includegraphics[height=6cm]{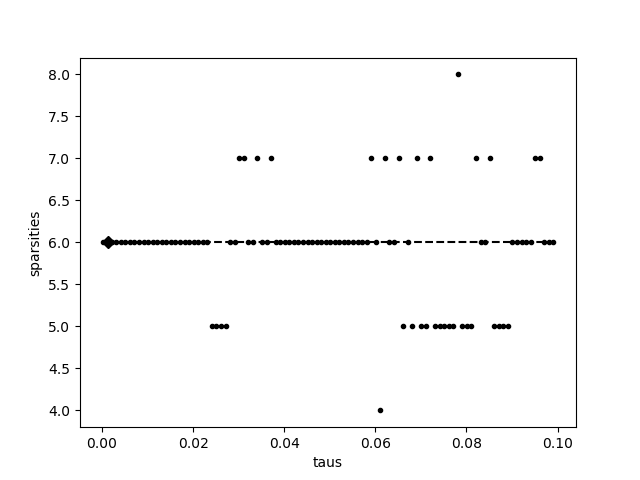}\includegraphics[height=6cm]{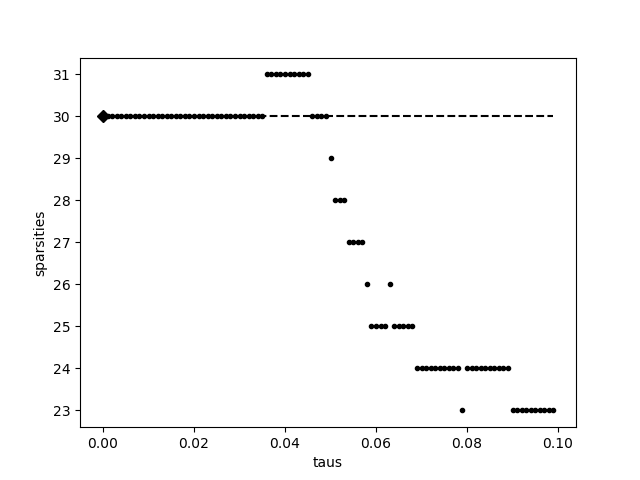}
\end{figure}

\begin{figure}
\hspace*{-1.75cm}
\caption{Error in approximate recovery algorithm for measures in the spheres $S^1$ and $S^7$}
\label{Fig: approxRecoveryError}
\includegraphics[height=6cm]{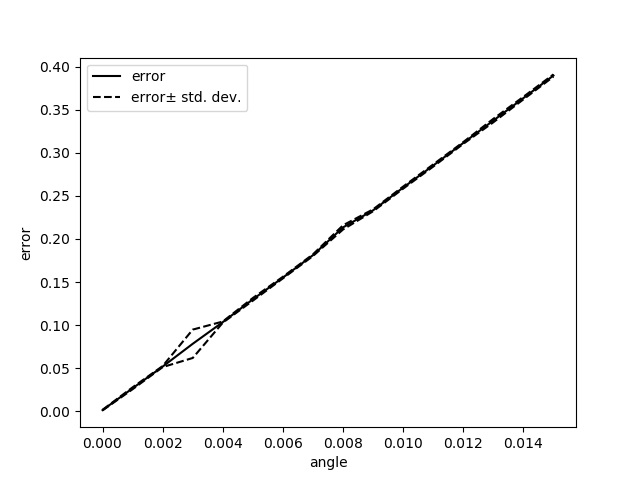}\includegraphics[height=6cm]{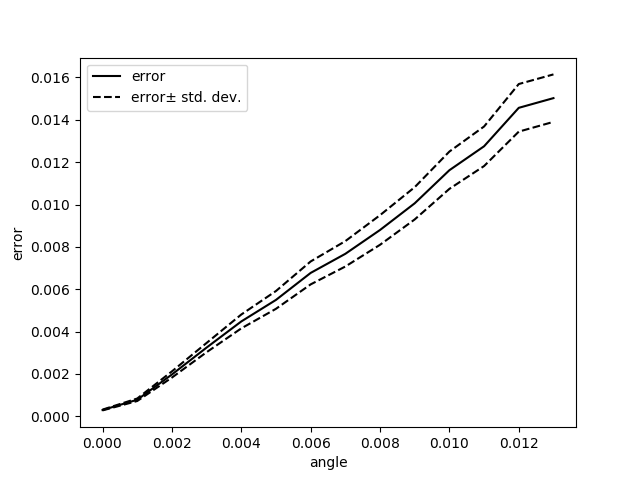}
\end{figure}

\begin{bibdiv}
\begin{biblist}

\bib{ALMT}{article}{
   author={Amelunxen, Dennis},
   author={Lotz, Martin},
   author={McCoy, Michael B.},
   author={Tropp, Joel A.},
   title={Living on the edge: phase transitions in convex programs with
   random data},
   journal={Inf. Inference},
   volume={3},
   date={2014},
   number={3},
   pages={224--294},
   issn={2049-8764},
   review={\MR{3311453}},
   doi={10.1093/imaiai/iau005},
}

\bib{BDDW}{article}{
   author={Baraniuk, Richard},
   author={Davenport, Mark},
   author={DeVore, Ronald},
   author={Wakin, Michael},
   title={A simple proof of the restricted isometry property for random
   matrices},
   journal={Constr. Approx.},
   volume={28},
   date={2008},
   number={3},
   pages={253--263},
   issn={0176-4276},
   review={\MR{2453366}},
   doi={10.1007/s00365-007-9003-x},
}

\bib{Barvinok}{book}{
   author={Barvinok, Alexander},
   title={A course in convexity},
   series={Graduate Studies in Mathematics},
   volume={54},
   publisher={American Mathematical Society, Providence, RI},
   date={2002},
   pages={x+366},
   isbn={0-8218-2968-8},
   review={\MR{1940576}},
   doi={10.1090/gsm/054},
}
\bib{GB}{article}{
author={Blekherman, Greg.},
journal={Personal communication.},
}

\bib{Candes}{article}{
   author={Cand{\`e}s, Emmanuel J.},
   title={The restricted isometry property and its implications for
   compressed sensing},
   language={English, with English and French summaries},
   journal={C. R. Math. Acad. Sci. Paris},
   volume={346},
   date={2008},
   number={9-10},
   pages={589--592},
   issn={1631-073X},
   review={\MR{2412803 (2009b:65104)}},
   doi={10.1016/j.crma.2008.03.014},
}

\bib{CandesRombergTao}{article}{
   author={Cand{\`e}s, Emmanuel J.},
   author={Romberg, Justin K.},
   author={Tao, Terence},
   title={Stable signal recovery from incomplete and inaccurate measurements},
   journal={Comm. Pure Appl. Math.},
   volume={59},
   date={2006},
   number={8},
   pages={1207--1223},
   issn={0010-3640},
   review={\MR{2230846 (2007f:94007)}},
   doi={10.1002/cpa.20124},
}

\bib{CandesTao}{article}{
   author={Candes, Emmanuel J.},
   author={Tao, Terence},
   title={Decoding by linear programming},
   journal={IEEE Trans. Inform. Theory},
   volume={51},
   date={2005},
   number={12},
   pages={4203--4215},
   issn={0018-9448},
   review={\MR{2243152 (2007b:94313)}},
   doi={10.1109/TIT.2005.858979},
}

\bib{Donoho}{article}{
   author={Donoho, David L.},
   title={Compressed sensing},
   journal={IEEE Trans. Inform. Theory},
   volume={52},
   date={2006},
   number={4},
   pages={1289--1306},
   issn={0018-9448},
   review={\MR{2241189}},
   doi={10.1109/TIT.2006.871582},
}

\bib{FQPD}{book}{
   author={Graf, Siegfried},
   author={Luschgy, Harald},
   title={Foundations of quantization for probability distributions},
   series={Lecture Notes in Mathematics},
   volume={1730},
   publisher={Springer-Verlag, Berlin},
   date={2000},
   pages={x+230},
   isbn={3-540-67394-6},
   review={\MR{1764176}},
   doi={10.1007/BFb0103945},
}

\bib{Julia}{article}{
author    = {Bezanson J.}
author	= {Edelman, A.} 
author = {Karpinski, S.}
author = {Shah, V.}
title     = {Julia: {A} Fresh Approach to Numerical Computing},
  journal   = {{SIAM} Review},
  volume    = {59},
  number    = {1},
  pages     = {65--98},
  year      = {2017},
  url       = {https://doi.org/10.1137/141000671},
  doi       = {10.1137/141000671},
  timestamp = {Wed, 17 May 2017 01:00:00 +0200},
  biburl    = {http://dblp.uni-trier.de/rec/bib/journals/siamrev/BezansonEKS17},
  bibsource = {dblp computer science bibliography, http://dblp.org}
}

\bib{JuMP}{article}{
author = {Dunning, I.}
author = {Huchette, J.}
author = {Lubin, M.},
title = {JuMP: A Modeling Language for Mathematical Optimization},
journal = {SIAM Review},
volume = {59},
number = {2},
pages = {295-320},
year = {2017},
doi = {10.1137/15M1020575},
}

\bib{Kostlan}{article}{
   author={Kostlan, Eric},
   title={On the expected number of real roots of a system of random
   polynomial equations},
   conference={
      title={Foundations of computational mathematics},
      address={Hong Kong},
      date={2000},
   },
   book={
      publisher={World Sci. Publ., River Edge, NJ},
   },
   date={2002},
   pages={149--188},
   review={\MR{2021981 (2004k:60150)}},
}

\bib{CCRip}{article}{
   author={Tillmann, Andreas M.},
   author={Pfetsch, Marc E.},
   title={The computational complexity of the restricted isometry property,
   the nullspace property, and related concepts in compressed sensing},
   journal={IEEE Trans. Inform. Theory},
   volume={60},
   date={2014},
   number={2},
   pages={1248--1259},
   issn={0018-9448},
   review={\MR{3164973}},
   doi={10.1109/TIT.2013.2290112},
}

\bib{L}{book}{
   author={Lasserre, Jean Bernard},
   title={Moments, positive polynomials and their applications},
   series={Imperial College Press Optimization Series},
   volume={1},
   publisher={Imperial College Press, London},
   date={2010},
   pages={xxii+361},
   isbn={978-1-84816-445-1},
   isbn={1-84816-445-9},
   review={\MR{2589247}},
}

\bib{Mosek}{article}{
author={MOSEK ApS},
title={The MOSEK large scale optimization solver, Version 8.1},
year={2017},
journal={\url{https://www.mosek.com/Downloads}},
}

\bib{Putinar}{article}{
   author={Putinar, Mihai},
   title={A note on Tchakaloff's theorem},
   journal={Proc. Amer. Math. Soc.},
   volume={125},
   date={1997},
   number={8},
   pages={2409--2414},
   issn={0002-9939},
   review={\MR{1389533}},
   doi={10.1090/S0002-9939-97-03862-8},
}

\bib{ShubSmale}{article}{
   author={Shub, M.},
   author={Smale, S.},
   title={Complexity of Bezout's theorem. II. Volumes and probabilities},
   conference={
      title={Computational algebraic geometry},
      address={Nice},
      date={1992},
   },
   book={
      series={Progr. Math.},
      volume={109},
      publisher={Birkhauser Boston, Boston, MA},
   },
   date={1993},
   pages={267-285},
   review={\MR{1230872}},
}
	
\bib{Tchakaloff}{article}{
   author={Tchakaloff, L.},
   title={Formules g\'en\'erales de quadrature m\'ecanique du type de Gauss},
   language={French},
   journal={Colloq. Math.},
   volume={5},
   date={1957},
   pages={69--73},
   issn={0010-1354},
   review={\MR{0092885}},
}

\bib{Humphreys}{book}{
   author={Humphreys, James E.},
   title={Introduction to Lie algebras and representation theory},
   series={Graduate Texts in Mathematics},
   volume={9},
   note={Second printing, revised},
   publisher={Springer-Verlag, New York-Berlin},
   date={1978},
   pages={xii+171},
   isbn={0-387-90053-5},
   review={\MR{499562}},
}

\bib{Wiki}{article}{
author={Wikipedia},
title={$E_8$ lattice},
journal={\url{https://en.wikipedia.org/wiki/E8_lattice}},
}

\end{biblist}
\end{bibdiv}

\raggedright

\end{document}